\documentclass[preprint,12pt]{elsarticle}

\usepackage{amsfonts,amssymb,amsmath,amsthm,bm}
\usepackage{mathtools}
\usepackage{dsfont}
\usepackage{color}
\usepackage{geometry}                		
\usepackage{subfigure}

\newtheorem{lemma}{Lemma}
\newtheorem{theorem}{Theorem}

\newtheorem{proposition}{Proposition}

\usepackage{lineno}

\makeatletter
\def\ps@pprintTitle{%
  \let\@oddhead\@empty
  \let\@evenhead\@empty
  \let\@oddfoot\@empty
  \let\@evenfoot\@oddfoot
}
\makeatother

\begin{document}

\begin{frontmatter}

\title{Multi-type branching diffusions with small mutation rates}
\author[label1]{Conrad J.\ Burden}
\ead{conrad.burden@anu.edu.au}
\address[label1]{Mathematical Sciences Institute, Australian National University, Canberra, Australia}


\begin{abstract}
Approximate solutions are found for Feller-like neutral multi-type branching diffusions $\big{(}\mathbf{X}(t)\big{)}_{t \in \mathbb{R}_{\ge 0}}$ 
in the limit of small mutation rates.  The method 
employed involves solving approximations to the Laplace transformed forward Kolmogorov equation by integrating along characteristics.  
To leading order in the scale $\theta$ of the overall mutation rate the super-critical diffusion is found to collapse onto a line density aligned 
with the stationary left eigenvector of the rate matrix following a rapid change of behaviour at a critical time $t_{\rm c}$, which has a weak logarithmic 
dependence on $\theta$.  First order in $\theta$ approximations to the density and moments of $\mathbf{X}(t)$ are also determined.  The first-order 
approximation corresponds to allowing at most one mutation in the coalescent tree of a sample and is valid for $t < t_{\rm c}$ in the supercritical 
case, and more generally for the critical and sub-critical cases.  
\end{abstract}

\begin{keyword}
Multi-type branching diffusion \sep Continuous state branching process \sep Feller diffusions
\end{keyword}
\end{frontmatter}

%

\section{Introduction}

In population genetics, approximate solutions to diffusion models with 
small mutation rates can be useful for estimating rate matrices from site frequency data~\citep{SchrempfHobolth17,BurdenTang17}.  
Using data from \citet[Figure 3b]{Lynch16} of per nucleotide site per generation mutation rates in eukaryote genomes, \citet[Section~1]{VOGL2020106} 
estimate  diffusion-scaled mutation rates to be typically $< 10^{-2}$.  Calculations exploiting this approach to date have been based on 
Moran~\citep{VoglClemente12} or Wright-Fisher~\citep{BurdenTang16} models.  There is value in extending this programme to diffusions based on 
branching models, which allow for stochastically variable population sizes.  

The purpose of this paper is to explore time-dependent solutions to multi-type branching diffusions in the limit of small mutation rates.  
Specifically we are interested in the process  
$\big{(}\mathbf{X}(t)\big{)}_{t \in \mathbb{R}_{\ge 0}} = \big{(} (X_1(t), \ldots, X_d(t))^{\rm T}\big{)}_{t \in \mathbb{R}_{\ge 0}} $ 
with $d \ge 1$ types and generator  
\begin{equation}		\label{genOfXMultitype}
{\cal L}_\mathbf{X} = \frac{1}{2} \sum_{i = 1}^d x_i \frac{\partial^2}{\partial x_i^2} + \alpha \sum_{i = 1}^d x_i \frac{\partial}{\partial x_i}
				+ \sum_{i , j= 1}^d  \gamma_{ji} x_j \frac{\partial}{\partial x_i},
\end{equation}
where $\alpha \in \mathbb{R}$ and $(\gamma_{ij})_{i, j = 1}^d$ is an instantaneous rate matrix satisfying $\gamma_{ij} \ge 0, i\ne j$, 
$\sum_{j = 1}^d \gamma_{ij} = 0$.  The diffusion is termed sub-critical, critical or super-critical according as $\alpha < 0$, $\alpha = 0$ 
or $\alpha > 0$ respectively.  For $d > 1$ there is no known exact time-dependent solution for the process generated by Eq.~(\ref{genOfXMultitype}) 
for a general rate matrix $\gamma$ and given initial condition $\mathbf{X}(0)$. 

This process arises in population genetics as the diffusion limit of a a multi-type Bienyam\'{e}-Galton-Watson (BGW)
process~\citep[Chapter~2]{Harris64}.  Here, diffusion limit 
is meant in the sense of \citet{kimura1964diffusion} in that simultaneous limits are taken in which the continuum time 
between generations is taken to zero, the effective population size becomes infinite, and the per-generation mutation rate is taken to zero in such a 
way that mutation events along any lineage become a continuous-time Markov process with a finite rate matrix 
$\gamma$~\citetext{\citealp{burden2018mutation}; \citealp[Section~2]{,BurdenGriffiths23}}.  Eq.~(\ref{genOfXMultitype}) can also 
be interpreted as a diffusion limit of a multi-type linear birth-death (BD) process in which birth and death rates become infinite while their difference 
$\alpha$ remains finite, by generalising the derivation for a Feller diffusion in \citet[Section~2.1]{BurdenGriffiths24}.  The process is neutral 
(as opposed to selective) in the sense that the expected total number of offspring per parent in the underlying BGW process or birth rate in the 
underlying BD process, and hence the parameter $\alpha$, does not depend on the parent type.  

If $d = 1$ the final term is not present in Eq.~\ref{genOfXMultitype} and the generator reduces to that of the well known Feller diffusion~\citep{feller1951diffusion}.  
Because of neutrality the total population $\big{(}\sum_{i = 1}^d X_i(t)\big{)}_{t \in \mathbb{R}_{\ge 0}}$ of the multi-type process is itself a 
rate-$\alpha$ Feller diffusion.  As $t \to \infty$ a super-critical Feller diffusion with initial condition $X(0) = x_0$ becomes extinct with probability 
$e^{-2\alpha x_0}$, and, if conditioned on survival, asymptotes to a known $x_0$-dependent density.  The population of a critical or sub-critical Feller 
diffusion will become extinct almost surely as $t \to \infty$.  In this limit the distribution of interest is the quasi-stationary 
distribution~\citep{yaglom1947certain,lambert2007quasi}, that is, the distribution of $X(t)$ conditioned on survival.  For $\alpha = 0$ the limiting distribution of 
$t^{-1}X(t)|(X(t) > 0)$ as $t \to \infty$ is exponential with rate 2, and for $\alpha < 0$, the limiting distribution of $X(t)|(X(t) > 0)$ as $t \to \infty$ is 
exponential with rate $2|\alpha|$.  

Known asymptotic results for the stationary and quasi-stationary behaviours of discrete multi-type branching processes are listed in 
\citet[p44]{Harris64} and \citet[pp186-192]{Athreya_1972} and summarised in the introduction to \citet{BurdenGriffiths23}.  
Rigorous results for the asymptotic stationary behaviours for critical and supercritical multi-type diffusions can be found in the continuous-state branching 
process literature~\citep{Barczy24,champagnat2008limit,Kyprianou18}.  Relatively straightforward derivations specific to the generator Eq.~(\ref{genOfXMultitype})   
are given in \citet[Section~4]{BurdenGriffiths23}.  The quasi-stationary and stationary distributions respectively are seen to collapse onto the line densities 
associated with a Feller diffusion lying along a ray in $(\mathbb{R}_{\ge 0})^d$ aligned with the principle left eigenvector of the mutation rate matrix $\gamma$.  
This is consistent with the known asymptotic limits of the discrete processes.  

The quasi-stationary limit of the subcritical multi-type diffusion is less straightforward \citep[Section~3.2]{champagnat2008limit}.  
In this case the limiting density in in $(\mathbb{R}_{\ge 0})^d$ does not collapse onto a line, and in common with the discrete case, is independent of the initial 
condition~$\mathbf{X}(0)$~\citep[Chapter V.4]{Athreya_1972}.  Although an exact solution of the limiting asymptotic density of the diffusion is 
not known, the asymptotic mean of the population is known to be $(2|\alpha|)^{-1}\bm{\pi}$, 
where $\bm{\pi}$ is the left-stationary eigenvector of the rate matrix $\gamma$ normalised so that $\sum_{i = 1}^d \pi_i = 1$ \citep[Lemma~2]{BurdenGriffiths23}.

We begin our analysis in Section~\ref{sec:backgound} with a brief summary of properties of the known solution to the $d = 1$ Feller diffusion needed for 
our subsequent analysis of small mutation rates.  

An obvious starting point for a small mutation rate approximation when $d > 1$ is to take as the unperturbed 
density the solution generated by Eq.~(\ref{genOfXMultitype}) when the rate matrix $\gamma$ is set identically equal to zero.  The naive unperturbed solution 
then corresponds trivially to $d$ independent Feller diffusions with given initial conditions $X_1(0), \ldots, X_d(0)$.   
However, this raises an interesting problem in the supercritical case, for which the asymptotic density as $t \to \infty$ is a line density aligned with the 
principle left eigenvector of the rate matrix, and it does not seem possible to approximate this known asymptotic density as a perturbation 
of the naive unperturbed solution.  The resolution to the dilemma was determined for the $d = 2$ case by \citet{burden2018mutation}, who 
found that for very small mutation rates the density of $\mathbf{X}(t)$ undergoes a sudden change of behaviour at a critical time $t_{\rm c}$ at which 
it transitions from approximating the naive unperturbed solution to being sandwiched between two lines which collapse towards the principle left 
eigenvector of the rate matrix.  In Section~\ref{sec:leadingOrder} their analysis is generalised to arbitrary $d \ge 2$.  

In Section~\ref{sec:firstOrder} the solution to the forward Kolmogorov is found to the first order in the scale of the rate matrix $\gamma$ via 
a Laplace transform by perturbing the naive unperturbed solution.  It is noted that for the super-critical the solution only applies for $t < t_{\rm c}$.  
In Section~\ref{sec:XMoments} this solution is used to calculate moments to first order in mutation rates of $\mathbf{X}(t)$.  We also discuss the consistency of the 
calculated moments in the current paper with earlier results for the quasi-stationary limit of the moments of the sub-critical diffusion by 
\citet{BurdenGriffiths23}.  Conclusions are drawn in 
Section~\ref{sec:Conclusions}.  Most of the technical details and proofs are contained in Appendices~A to E.

%

\section{Definitions and background material}
\label{sec:backgound}

A convenient parameterisation for the rate matrix is 
\begin{equation}	\label{gammaParam}
\gamma_{ij} = \tfrac{1}{2} \theta Q_{ij}, 
\end{equation}
where we require $-1 \le \min_{i = 1, \ldots, d} Q_{ii} \le 0$, and hence any $\theta \ge 2\max_{i = 1, \ldots, d} \sum_{j: j \ne i}\gamma_{ij}$ 
becomes a suitable perturbation parameter for studying slow mutation rates.  With this parameterisation mutations along any ancestral branch 
can be thought of as a finite-state discrete-time Markov chain with transition matrix $(\delta_{ij} + Q_{ij})_{i, j = 1}^d$, whose transitions occur 
at times determined by a Poisson process with rate $\tfrac{1}{2}\theta$.  Note that this interpretation includes ghost mutations of a type to itself.  

For $d = 1$ the forward equation is that for a Feller diffusion \citep[Eq.~(5.1)]{feller1951diffusion}, 
\[
\frac{\partial f_X}{\partial t} = \frac{1}{2} \frac{\partial^2}{\partial x^2} (x f_X) - \alpha \frac{\partial}{\partial x} (x f_X).
\]
The solution with initial condition $X(0) = x_0$ is obtained via a Laplace transform (LT) in~$x$ \citep[Section~5.11]{Cox78}, yielding \citep[Eq.~(5.2)]{feller1951diffusion},
\begin{eqnarray}	\label{FellersPoissonGammaMixture}
f_X(t, x; \alpha, x_0) & = &  \delta(x) e^{-x_0\mu(t; \alpha)}  \\
	& & +\, \sum_{l = 1}^\infty e^{-x_0\mu(t; \alpha)} \frac{(x_0\mu(t; \alpha))^l}{l!} \frac{x^{l - 1}}{\beta(t; \alpha)^l (l - 1)!} e^{-x/\beta(t; \alpha)}, \quad x \ge 0,  \nonumber
\end{eqnarray}
where $\delta(x)$ is the Dirac $\delta$-function \citep[Section~15]{Dirac58} and  
\begin{equation}	\label{muBetaDef}
\mu(t; \alpha) = \frac{2\alpha e^{\alpha t}}{e^{\alpha t} - 1}, \qquad \beta(t; \alpha) = \frac{e^{\alpha t} - 1}{2\alpha}.  
\end{equation}
We set $\mu(t; 0) = 2/t$ and $\beta(t; 0) =  t/2$.  The first term in Eq.~(\ref{FellersPoissonGammaMixture}) accounts for extinction of the entire population up to time $t$.  
The interpretation of the remaining, continuous part is that the population at time $t$ is composed of independent 
families descended from $L = l$ founders at time $0$, where $L$ is Poisson with mean $x_0\mu(t; \alpha)$.
The size of each family is an independent exponential random variable with means $\beta(t; \alpha)$, 
as observed by \citet[Theorem~2.1(ii)]{o1995genealogy}.  See also  \citet[Section~2.2 and Fig.~1]{BurdenGriffiths24}.  

%
%

\subsection{Exponential scaling}

Throughout much of the current paper we will work with an exponentially scaled multi-type diffusion defined by 
\begin{equation}	\label{ZDef}
\mathbf{Z}(t) = \mathbf{X}(t) e^{-\alpha t}.  
\end{equation}
The corresponding generator is \citep[Eq.~(15)]{BurdenGriffiths23} 
\begin{equation}	\label{genOfZ}
{\cal L}_\mathbf{Z} = \tfrac{1}{2} e^{-\alpha t}\sum_{i = 1}^d z_i \frac{\partial^2}{\partial z_i^2} 
				+ \sum_{i , j= 1}^d  \gamma_{ji} z_j \frac{\partial}{\partial z_i}.
\end{equation}
Define the LT 
\[
\zeta(t; {\bm \phi}) = \mathbb{E}[e^{-{\bm \phi}\cdot\mathbf{Z}(t)}].
\]
Then 
\begin{eqnarray*}
{\cal L}_\mathbf{Z} e^{-{\bm \phi}\cdot\mathbf{z}} & = & \tfrac{1}{2} e^{-\alpha t}\sum_{i = 1}^d z_i \phi_i^2 e^{-{\bm \phi}\cdot\mathbf{z}} - 
						\sum_{i , j= 1}^d  \gamma_{ji} z_j \phi_i e^{-{\bm \phi}\cdot\mathbf{z}}	\\
		& = & \left( - \tfrac{1}{2} e^{-\alpha t}\sum_{i = 1}^d  \phi_i^2 \frac{\partial}{\partial \phi_i} 
						+ \sum_{i , j= 1}^d  \gamma_{ji} \phi_i \frac{\partial}{\partial \phi_j} \right) e^{-{\bm \phi}\cdot\mathbf{z}},
\end{eqnarray*}
and hence
\begin{equation}	\label{zetapde}
\frac{\partial}{\partial t} \zeta(t; {\bm \phi}) = \left( - \tfrac{1}{2} e^{-\alpha t}\sum_{i = 1}^d  \phi_i^2 \frac{\partial}{\partial \phi_i} 
						+ \sum_{i , j= 1}^d  \gamma_{ji} \phi_i \frac{\partial}{\partial \phi_j} \right) \zeta(t; {\bm \phi}).  
\end{equation}

For the case of $d = 1$ allele the LT satisfies 
\[
\frac{\partial \zeta_{\text{1-allelle}}(t, \phi)}{\partial t}  + \tfrac{1}{2} e^{-\alpha t} \phi^2 \frac{\partial \zeta_{\text{1-allelle}}(t, \phi)}{\partial \phi} = 0.  
\]
The function $\zeta_{\text{1-allelle}}$ is constant along characteristic curves satisfying 
\begin{equation}	\label{charEq1Allele}
\frac{d\phi}{dt} = \tfrac{1}{2} e^{-\alpha t} \phi^2, 
\end{equation}
with initial condition $\phi(t = 0) = \xi$, say.  The solution is 
\[
\phi = \frac{\xi}{1 - \frac{\xi}{2\alpha}(1 - e^{-\alpha t})} = \frac{\xi}{1 - \xi/\mu(t)}, 
\]
which rearranges to 
\[
\xi = \frac{\phi}{1 + \phi/\mu(t)}. 
\]
If $Z(0) = X(0) = x_0$, we have $\zeta_{\text{1-allelle}}(t; \phi) = \zeta_{\text{1-allelle}}(0; \xi) = \mathbb{E}[e^{-\xi X(0)}] = e^{-\xi x_0}$, yielding the solution 
\begin{equation}	\label{zeta1Allele}
\zeta_{\text{1-allelle}}(t, \phi; x_0) = \exp\left\{ -x_0 \frac{\phi}{1 + \phi/\mu(t)} \right\}.  
\end{equation}
This is the LT of 
\begin{eqnarray}	\label{f1Allele}
f_{\text{1-allelle}}(t, z; x_0) & = & e^{\alpha t} f_X(t, ze^{\alpha t}; x_0) \nonumber \\
	& = & \delta(z) e^{-x_0\mu(t)} + \, \sum_{l = 1}^\infty e^{-x_0\mu(t)} \frac{(x_0\mu(t))^l}{l!} \frac{\mu(t)^l x^{l - 1} e^{-z \mu(t)}}{(l - 1)!} , \quad z \ge 0, \nonumber \\
\end{eqnarray}
with $f_X(t, x; x_0)$ the Poisson-Gamma mixture in Eq.~(\ref{FellersPoissonGammaMixture}).  To facilitate calculation of LTs in the following section 
we also define $f_{\text{1-allelle}}(t, z; x_0)$ to be zero for $z < 0$.  
Conditional on a population descended from a single ancestor surviving to time $t$, we have \citep[Lemma~2]{BurdenGriffiths25}
\begin{equation}	\label{1ancestor1allele}
f_{\text{1-allele}}^{\text{1-ancestor}}(t, z) = \lim_{x_0 \to 0} \frac{f_{\text{1-allelle}}(t, z; x_0) - \delta(z) e^{-x_0\mu(t)}}{1 - e^{-x_0\mu(t)}}
	= \mu(t) e^{-z \mu(t)}.
\end{equation}

%
%

\section{Leading order in $\theta$ solution for $d$ types and $\alpha > 0$}
\label{sec:leadingOrder}

 \citet[Theorem~1.4]{Kyprianou18} and \citet[Proposition~1]{BurdenGriffiths23} have shown that the asymptotic density as $t \to \infty$ of a super-critical 
 multi-type branching diffusion consists of a line density concentrated along a ray aligned with the left-eigenvector of the rate matrix $\gamma$.  
 \citet{burden2018mutation} have obtained a lowest order in the mutation rate solution which exhibits an asymptotic collapse onto such a 
 line density for a super-critical diffusion with $d = 2$ types and a symmetric rate matrix.  The following proposition generalises their calculation 
 to $d \ge 2$ types and an arbitrary rate matrix $\gamma$.  
 
 \begin{proposition}	\label{prop:leadingOrder}
 Consider a super-critical neutral multi-type branching diffusion with exponentially scaled generator Eq.~(\ref{genOfZ}) where $\alpha > 0$ and $\gamma$ 
 is parametrised by Eq.~(\ref{gammaParam}).  Then as $\theta \to 0$ the solution to the forward equation with initial condition $\mathbf{Z}(0) = \mathbf{x}(0)$ is 
\begin{equation}	\label{approxFZ}
f(t, \mathbf{z}; \mathbf{x}(0)) = \begin{cases}
\prod_{i = 1}^d f_{\text{1-allelle}}\left( t, z_i; x_i(0) \right) + \mathcal{O}(\theta), & t < t_{\rm c}, \\ \\
\det \left(e^{-\gamma(t - t_{\rm c})} \right) \prod_{i = 1}^d f_{\text{1-allelle}}\left( \infty, (\mathbf{z}^{\rm T} e^{-\gamma(t - t_{\rm c})})_i; x_i(0) \right) 
											+ \mathcal{O}(\theta), & t > t_{\rm c}, \\
\end{cases}
\end{equation}
where 
\begin{equation}	\label{tcDef}
t_{\rm c} = - \alpha^{-1} \log\theta.  
\end{equation} 
\end{proposition}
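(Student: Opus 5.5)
We work with the Laplace transform $\zeta(t;\bm\phi)$ and solve Eq.~(\ref{zetapde}) by the method of characteristics, treating the two terms in the first-order operator as acting on different time scales. Eq.~(\ref{zetapde}) is a first-order linear PDE with no zeroth-order term. Hence $\zeta$ is constant along curves satisfying
\begin{equation*}
\frac{d\phi_i}{dt} = \tfrac12 e^{-\alpha t}\phi_i^2 - \sum_{j}\gamma_{ij}\phi_j ,
\end{equation*}
that is, $d\bm\phi/dt = \tfrac12 e^{-\alpha t}\bm\phi^{\circ 2} - \gamma\bm\phi$, with $\bm\phi(0)=\bm\xi$. Then $\zeta(t;\bm\phi(t)) = \exp\{-\bm\xi\cdot\mathbf{x}(0)\}$. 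The plan is to invert the map $\bm\xi\mapsto\bm\phi(t)$ approximately, and then read off the density.

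\emph{Regime $t<t_{\rm c}$.} Here the mutation term has integrated size $\int_0^t\|\gamma\|\,ds = \mathcal{O}(\theta t)$. For $t<t_{\rm c}$ this is $\mathcal{O}(\theta\log\theta^{-1})$, which we absorb into the error. Dropping $\gamma$ decouples the characteristic equations into $d$ copies of Eq.~(\ref{charEq1Allele}). This gives $\xi_i = \phi_i/(1+\phi_i/\mu(t))$, so $\zeta$ is the product of the one-allele transforms (\ref{zeta1Allele}). Inverting the product termwise gives $\prod_i f_{\text{1-allelle}}(t,z_i;x_i(0))$.

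\emph{Regime $t>t_{\rm c}$.} Split the characteristic at an intermediate time $t_1$ with $1\ll t_1 \ll t_{\rm c}$, for instance $t_1=\tfrac12 t_{\rm c}$.
\begin{itemize}
\item On $[0,t_1]$ mutation is negligible, as above. Since $e^{-\alpha t_1}\to0$, the map is already $\xi_i\approx \phi_i(t_1)/(1+\phi_i(t_1)/\mu(\infty))$, with $\mu(\infty)=2\alpha$.
\item On $[t_1,t]$ the quadratic term carries the factor $e^{-\alpha s}$. Its total contribution $\int_{t_1}^\infty e^{-\alpha s}ds$ is small, so the flow is essentially linear: $\bm\phi(t)\approx e^{-\gamma(t-t_1)}\bm\phi(t_1)$.
\end{itemize}
The crossover is sharp because the branching nonlinearity acts only for $s\lesssim \alpha^{-1}$, while mutation acts on the time scale $\theta^{-1}$. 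These two contributions are comparable exactly where $e^{-\alpha s}\sim\theta$, i.e.\ at $s = t_{\rm c}$. Replacing $t_1$ by $t_{\rm c}$ introduces the factor $e^{-\gamma(t_{\rm c}-t_1)} = I+\mathcal{O}(\theta t_{\rm c})$. The time $t_{\rm c}$ is therefore the natural reference point, and the approximation becomes
\begin{equation*}
\zeta(t;\bm\phi)\approx \prod_i \zeta_{\text{1-allelle}}\big(\infty,(e^{\gamma(t-t_{\rm c})}\bm\phi)_i;x_i(0)\big).
\end{equation*}
Finally we invert the transform. If $g(\mathbf{z})$ has transform $G(\bm\phi)$, then $\det(A)\,g(A^{\rm T}\mathbf{z})$ has transform $G(A^{-1}\bm\phi)$, by the change of variables $\mathbf{w}=A^{\rm T}\mathbf{z}$. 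Taking $A=e^{-\gamma(t-t_{\rm c})}$ gives the stated density with the Jacobian factor $\det(e^{-\gamma(t-t_{\rm c})})$.

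\emph{Main obstacle.} The hard step is controlling the errors uniformly over $t$. On the interval $[t_1,t]$, the quantity $t-t_{\rm c}$ may be unbounded, so $\theta(t-t_{\rm c})$ need not be small. Hence we must keep $e^{-\gamma(t-t_{\rm c})}$ exactly, not expand it. The task is then to show that the neglected quadratic term, after propagation by $e^{-\gamma(\cdot)}$, still contributes only $\mathcal{O}(\theta)$.

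I would first use the bound $\|e^{-\gamma s}\|\le$ const. This holds because $\gamma$ is a rate matrix, so $e^{\gamma s}$ is stochastic; for $e^{-\gamma s}$ one instead works with the forward direction. With this bound, a Gronwall estimate on the difference between the exact and approximate characteristics should give an error of order $\int_{t_{\rm c}}^\infty e^{-\alpha s}ds\cdot$const$=\mathcal{O}(\theta)$. Near $t_{\rm c}$ itself, the matching between the two regimes is only logarithmically accurate, and I would accept that as the $\mathcal{O}(\theta)$ statement to leading order, as the proposition intends.
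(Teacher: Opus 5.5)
This is essentially the paper's argument: drop the mutation term on $[0,t_{\rm c}]$ and the quadratic term on $[t_{\rm c},t]$, match at $t_{\rm c}$ using $\mu(t_{\rm c})=2\alpha/(1-\theta)\approx 2\alpha$, and invert by the linear change of variables with Jacobian $\det e^{-\gamma(t-t_{\rm c})}$. You should, however, split directly at $t_{\rm c}$ as the paper does. At $t_1=\tfrac12 t_{\rm c}$, each of your two bullet steps, estimated as you estimate it, costs $e^{-\alpha t_1}=\theta^{1/2}$; these two errors cancel only through a commutation argument you do not give, and shifting the reference time in $e^{-\gamma(\cdot)}$ does not touch them.

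Your uniform-error sketch goes beyond anything the paper attempts, but it rests on a false bound: $\|e^{-\gamma s}\|$ is not bounded, since already $\det e^{-\gamma s}=e^{-s\,{\rm tr}\,\gamma}\to\infty$. Run the Gronwall estimate backward from $(t,\bm{\phi})$ instead. There the linear propagator $e^{\gamma(t-s)}$ is stochastic, hence an $\ell^\infty$-contraction preserving $\bm{\phi}\ge\mathbf{0}$, and the backward drift map $\phi\mapsto\phi/(1+\phi/\mu)$ is $1$-Lipschitz on $\phi\ge 0$. This gives an error in $\bm{\xi}$ of order $\theta t_{\rm c}=\mathcal{O}(\theta\log\theta^{-1})$ uniformly in $t$, which is exactly the logarithmic loss you already flagged.
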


A formal proof of this proposition consisting of solving approximations to the characteristic equations arising from Eq.~(\ref{zetapde}) 
for $t < t_{\rm c}$ and $t > t_{\rm c}$ is given in \ref{sec:leadingOrderProof}.  The exact characteristic equations Eq.~(\ref{charEqns}) form a set 
of coupled ordinary differential equations, solution of which remains intractable in the general case.  The approximation relies 
on noting that for $\theta << 1$ the first, `genetic drift', term on the right hand side of Eq.~(\ref{genOfZ}) dominates for $t <  t_{\rm c}$, whereas the second, 
`mutation', term dominates for $t >  t_{\rm c}$.  Fig.~\ref{fig:3alleleCharacteristics_theta0.001} shows an example numerical solution to the 
exact characteristic equation and the small-$\theta$ approximate analytic solution for $d = 3$.  

\begin{figure}
 \centering
 \includegraphics[width=0.5\linewidth]{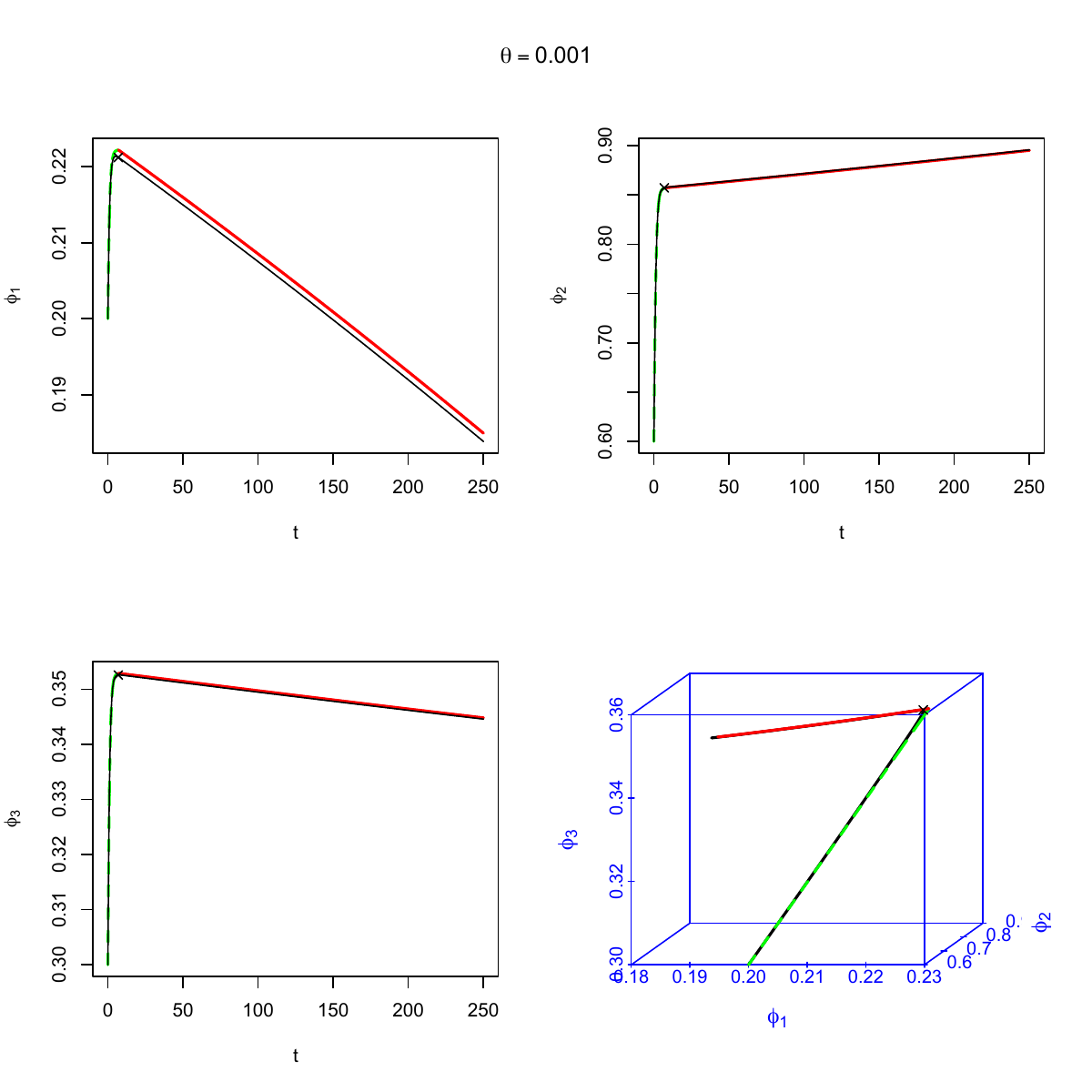}
  \caption{Numerical solution to the exact characteristic equations Eq.~(\ref{charEqns}) (black) and the approximate solution Eq.~(\ref{charEqnsApprox}) 
  (green dashed curve, $t < t_{\rm c}$ and red curve, $t > t_{\rm c}$).  Parameters are $d = 3$, $\alpha = 1$, $Q_{ij}$ randomly generated, 
  $\mathbf{x}(0) = (0.2, 0.6, 0.3)$, $\theta = 0.001$.  The point on the numerical solution to the exact equations corresponding corresponding to the time 
  $t_{\rm c}$ is marked with a cross.  }
 \label{fig:3alleleCharacteristics_theta0.001}
 \end{figure}

The first phase up to the critical changeover time $t_{\rm c}$ is the solution corresponding to the generator Eq.~(\ref{genOfZ}) with $\gamma \equiv 0$.  
It consists simply of $d$ independent Feller diffusions with initial conditions $Z_i(0) = x_i(0)$.  
Because $f_{\text{1-allelle}}(t, z; x(0))$ is only non-zero for $z \ge 0$, the support of the second phase of the approximate density Eq.~(\ref{approxFZ}) 
is the subregion of $\mathbb{R}_{\ge 0}^d$ bounded by the set of $(d - 1)$-dimensional hyperplanes 
$(\mathbf{z}^{\rm T} e^{-\gamma(t - t_{\rm c})})_i = 0$, $i = 1, \ldots, d$.  The $i$-th hyperplane is orthogonal to the $i$-th column of the 
matrix $e^{-\gamma(t - t_{\rm c})}$.  At $t = t_{\rm c}$ this is the hyperplane bounded by the axes orthogonal to the $i$-th axis.  For  $t > t_{\rm c}$
this hyperplane collapses inwards, approaching the stationary left eigenvector of $\gamma$ to become a line-density as $t \to \infty$.  

For comparison of data simulated from Eq.~(\ref{approxFZ}) using parameters  $\alpha = 0.75$, $d = 2$,  $\gamma_{12} = \gamma_{21} = 0.02$ with 
a multitype diffusion simulated by a multi-type BGW model, see \citet[Fig.~4(i) to (p)]{burden2018mutation}\footnote{To 
convert from the notation of \citet{burden2018mutation} to that of the current paper, replace $\kappa_0$ with $2\alpha$, $s$ with $\alpha t$ and 
$r_{ij}$ with $\gamma_{ij}$.}.  In these simulations it is clearly visible that the density undergoes a rapid changeover at time $t_{\rm c}$, after which 
it is, to a good approximation, sandwiched between two lines which collapse towards the left eigenvalue of the rate matrix.\\
 
 \noindent{\bf Example:} 
 A parent-independent rate matrix is one whose off-diagonal elements are of the form $\gamma_{ij} = \gamma_j$, independent of $i$.  In Eq.~(\ref{gammaParam}) set 
 $\theta = 2\sum_{j = 1}^d \gamma_j$ and $p_i = \gamma_i/\sum_{j = 1}^d \gamma_j$ for $i = 1, \ldots, d$. Then 
\[
Q_{ij} = p_j  - \delta_{ij}, \quad i, j = 1, \ldots, d,   
\]
where $p_j \ge 0$ and $\sum_{j = 1}^d p_j = 1$, and hence $Q^2 = -Q$, leading to   
\[
e^{\gamma t} = e^{\frac{1}{2}\theta t Q} = I + (1 - e^{-\frac{1}{2}\theta t}) Q.  
\]
In the second phase of Eq.~(\ref{approxFZ}), 
\[
\det \left(e^{-\gamma(t - t_{\rm c})} \right) = \exp{\rm tr}\left(-\gamma(t - t_{\rm c})\right) = e^{\frac{1}{2}(d - 1)\theta (t - t_{\rm c})},
\]
and 
\[
\left(\mathbf{z}^{\rm T} e^{-\gamma(t - t_{\rm c})}\right)_i = |\mathbf z|p_i + \left(z_i - |\mathbf z|p_i\right)  e^{\frac{1}{2}(d - 1)\theta (t - t_{\rm c})},
\]
where $|\mathbf z| = \sum_{l = 1}^d z_l$. 

%
%

\section{First order in $\theta$ corrections}
\label{sec:firstOrder}

In the spirit of various papers which have addressed the problem of approximating multi-type Wright-Fisher diffusions with small mutation rates 
\citep{roychoudhury2010sufficiency,vogl2014estimating,vogl2015inference,burden2016approximate,SchrempfHobolth17,VOGL2020106}, 
we seek to derive first order in $\theta$ corrections to the trivial case of $d$ independent Feller diffusions.  
In the case of Wright-Fisher diffusions, the first order approximation is equivalent to having at most one mutation in the coalescent tree of a sample 
back to the most recent common ancestor (MRCA) \citep{BurdenGriffiths19a,BurdenGriffiths19b}, and this observation is expected to carry over to 
multi-type branching diffusions.  However, in the previous section we have seen that, for super-critical branching diffusions, a second constraint manifests, 
namely that the time since the origin of the process should be less than the critical time $t_{\rm c}$ at which a rapid transition to a collapse onto a 
line density ocurs, as in Eq.~(\ref{approxFZ}).  For the remainder of the paper we will be assuming the unperturbed 
density to be that of $d$ independent Feller diffusions, 
\[
f^{(0)}(t, \mathbf{z}; \mathbf{x}(0)) = \prod_{i = 1}^d f_{\text{1-allelle}}\left( t, z_i; x_i(0) \right), \quad t, z_i \ge 0,  \quad i = 1, \ldots, d, 
\]
while noting that for the supercritical case $\alpha > 0$ the first-order perturbed solution only applies for 
$t < t_{\rm c}$.  

This particular restriction does not apply to the sub-critical and critical cases $\alpha \le 0$.  However, irrespective of the sign of $\alpha$, 
one expects the accuracy of a first order in $\theta$ approximation to be degraded as $\theta t$ increases to a point where more than one 
mutation can be expected in the coalescent tree of a sample.  Whereas the assumed unperturbed solution conditioned on survival maintains 
a memory of the initial conditions as $t \to \infty$, the mean of the sub-critical quasi-stationary distribution is known to align with the left stationary 
eigenvector of the rate matrix, and is therefore independent of initial allele abundances.  We comment further on how the perturbative 
small-$\theta$ approximation can be adapted to the quasi-stationary distribution in Section~\ref{sec:QSMoments}.  

%
%

\subsection{LT of the density of $\mathbf{Z}(t)$ to first order in $\theta$}

The LT in $\mathbf{z}$ of the exact density $f(t, \mathbf{z}; \mathbf{x}(0))$ satisfies Eq.~(\ref{zetapde})  with initial boundary condition 
\begin{equation}	\label{zetaIC}
\zeta(0; {\bm \phi}) = e^{-\bm{\phi}\cdot \mathbf{x}(0)}. 
\end{equation}
A complete solution remains intractable. The following lemma provides a perturbative solution to first order in $\theta$, which is obtained by integrating 
along perturbed characteristic curves.  The proof of Lemma~\ref{lemma:zetaPerturb} is given in \ref{appendix:zetaPerturb}.  
\begin{lemma}	\label{lemma:zetaPerturb}
The perturbative solution to Eq.~(\ref{zetapde}) with the above initial condition and $\gamma_{ij}$ as in Eq.~(\ref{gammaParam}) is 
\begin{eqnarray}	\label{zetaOfTPhi}
\zeta(t, \bm{\phi}) = \zeta^{(0)}(t, \bm{\phi}) \left(1 - \theta \sum_{j = 1}^d x_j(0) \xi_j^{(1)}(t, \bm{\phi}) + \mathcal{O}(\theta^2) \right), \quad \text{as }\theta \to 0, 
\end{eqnarray}
where 
\[
\zeta^{(0)}(t, \bm{\phi}) = \prod_{j = 1}^d  \zeta_{\text{1-allele}}(t, \phi_j; x_j(0)),  
\]
with $\zeta_{\text{1-allele}}$ defined in Eq.~(\ref{zeta1Allele}), 
\begin{eqnarray}	\label{xi1Of_tphi}
\xi_i^{(1)}(t, \bm{\phi}) 
	& = & \frac{\left(\frac{\phi_i}{2\alpha}\right)^2}{\left(1 + \frac{\phi_i}{\mu(t)}\right)^2} \sum_{j = 1}^d Q_{ij} \frac{1}{\frac{2\alpha}{\phi_j} - e^{-\alpha t}} \times \nonumber \\ 
	& & \qquad \left\{ \left(\frac{2\alpha}{\phi_i} - e^{-\alpha t} \right)^2 \alpha t - \left(  \frac{2\alpha}{\phi_i} 
							- \frac{2\alpha}{\phi_j} \right)^2 \log \left(1 + \frac{\phi_j}{\mu(t)}\right) \right\},  
\end{eqnarray}
and $\mu(t)$ defined by Eq.~(\ref{muBetaDef}).  
\end{lemma}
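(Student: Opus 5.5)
The plan is to solve Eq.~(\ref{zetapde}) by the method of characteristics, as was done for $d=1$ in Eq.~(\ref{charEq1Allele}), and to perturb the characteristics to first order in $\theta$.

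\textbf{Characteristics and the form of $\zeta$.} Written as a first-order PDE,
\[
\partial_t\zeta+\sum_j\Big(\tfrac12 e^{-\alpha t}\phi_j^2-\tfrac12\theta\,(Q\bm\phi)_j\Big)\partial_{\phi_j}\zeta=0 ,
\]
so $\zeta$ is constant along the curves $\bm\phi(s)$ satisfying
\[
\frac{d\phi_j}{ds}=\tfrac12 e^{-\alpha s}\phi_j^2-\tfrac12\theta\sum_k Q_{jk}\phi_k .
\]
Let $\bm\xi(t,\bm\phi)$ be the starting point at $s=0$ of the characteristic through $(t,\bm\phi)$. Then the initial condition (\ref{zetaIC}) gives
\[
\zeta(t,\bm\phi)=\exp\{-\bm\xi(t,\bm\phi)\cdot\mathbf x(0)\}.
\]
Write $\bm\xi=\bm\xi^{(0)}+\theta\bm\xi^{(1)}+\mathcal O(\theta^2)$, where $\xi_j^{(0)}=\phi_j/(1+\phi_j/\mu(t))$ is the one-allele result. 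Then $\zeta^{(0)}$ factorises into the product of the $\zeta_{\text{1-allele}}$ in Eq.~(\ref{zeta1Allele}), and expanding $e^{-\theta\,\mathbf x(0)\cdot\bm\xi^{(1)}}$ gives Eq.~(\ref{zetaOfTPhi}). The rest of the proof is the computation of $\xi^{(1)}_j$.

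\textbf{Computing $\xi^{(1)}$ via reciprocals.} The characteristic equation linearises in the reciprocals $\psi_j=1/\phi_j$:
\[
\frac{d\psi_j}{ds}=-\tfrac12e^{-\alpha s}+\tfrac12\theta\,\psi_j^2\sum_kQ_{jk}\psi_k^{-1}.
\]
At zeroth order, fixing the terminal value $\psi_j(t)=1/\phi_j$, one gets
\[
\psi^{(0)}_j(s)=\frac{a_j+e^{-\alpha s}}{2\alpha},\qquad a_j:=\frac{2\alpha}{\phi_j}-e^{-\alpha t}.
\]
Integrating the exact equation backwards from $t$ to $0$ and substituting the zeroth-order solution into the $\mathcal O(\theta)$ term gives
\[
\frac1{\xi_j}=\frac1{\xi_j^{(0)}}-\tfrac12\theta\int_0^t\psi_j^{(0)}(s)^2\sum_kQ_{jk}\psi_k^{(0)}(s)^{-1}\,ds+\mathcal O(\theta^2).
\]
Inverting this,
\[
\xi^{(1)}_j=\tfrac12(\xi^{(0)}_j)^2\sum_kQ_{jk}\,\frac{1}{2\alpha}\int_0^t\frac{(a_j+e^{-\alpha s})^2}{a_k+e^{-\alpha s}}\,ds .
\]
The factor $(\xi^{(0)}_j)^2=\phi_j^2/(1+\phi_j/\mu)^2$, combined with the $(2\alpha)^{-1}$ and the change of variable below (which contributes a further $\alpha^{-1}$), produces the prefactor $(\phi_j/2\alpha)^2/(1+\phi_j/\mu)^2$ in (\ref{xi1Of_tphi}).

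\textbf{Evaluating the integral.} Substitute $u=e^{-\alpha s}$, so $ds=-du/(\alpha u)$, and decompose by partial fractions:
\[
\frac{(a_j+u)^2}{u(a_k+u)}=\frac{a_j^2}{a_k\,u}+1-\frac{(a_j-a_k)^2}{a_k(a_k+u)} .
\]
The three terms are handled as follows.
\begin{itemize}
\item The $1/u$ term integrates to $a_j^2\,\alpha t/a_k$.
\item The constant term gives a contribution independent of $k$, which is annihilated by $\sum_kQ_{jk}=0$.
\item The last term gives $-(a_j-a_k)^2a_k^{-1}\log\frac{a_k+1}{a_k+e^{-\alpha t}}$. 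Since $a_k+1=2\alpha/\phi_k+1-e^{-\alpha t}$, this logarithm equals $\log(1+\phi_k/\mu(t))$.
\end{itemize}
Since $a_j-a_k=2\alpha/\phi_j-2\alpha/\phi_k$, collecting terms reproduces (\ref{xi1Of_tphi}) exactly, including the factor $1/(2\alpha/\phi_k-e^{-\alpha t})$.

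\textbf{Main obstacle.} The delicate step is justifying the first-order expansion of the characteristics, namely that substituting $\psi^{(0)}$ into the $\theta$-term incurs only an $\mathcal O(\theta^2)$ error. For fixed $t$ and $\bm\phi$ with the $a_k$ bounded away from zero, this follows from a Gronwall-type bound on the ODE for $\psi-\psi^{(0)}$, and I would state it at that level of rigour. The secular $\alpha t$ term shows that the error is not uniform in $t$, which is consistent with the restriction $t<t_{\rm c}$ noted above.
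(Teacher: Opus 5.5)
Your proof is correct. It follows the paper's overall strategy: characteristics of the transformed equation, a first-order expansion in $\theta$ of the foot point $\bm\xi(t,\bm\phi)$, and then the same integral, evaluated with the substitution $v=e^{-\alpha\tau}$, the same partial fractions, and $\sum_j Q_{ij}=0$ to discard the $k$-independent constant. The difference lies in how you obtain $\xi^{(1)}$.

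The paper perturbs the forward characteristic, $\phi_i=\phi_i^{(0)}(t,\bm\xi)+\theta\phi_i^{(1)}(t,\bm\xi)$. It solves the linear variational equation for $\phi_i^{(1)}$ with integrating factor $\big(\phi_i^{(0)}(t)/\phi_i^{(0)}(\tau)\big)^2$. It then inverts $\bm\xi\mapsto\bm\phi$ to first order through the diagonal Jacobian $\partial\phi_i^{(0)}/\partial\xi_j$, which also requires rewriting $\phi_i^{(0)}(\tau,\bm\xi^{(0)}(t,\bm\phi))$ in terms of $\bm\phi$.

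Your substitution $\psi_j=1/\phi_j$ turns the unperturbed flow into a pure translation. Integrating backward from $(t,\bm\phi)$ therefore gives $1/\xi_j$ directly, and the inversion reduces to $1/(x-\theta y)=1/x+\theta y/x^2+\mathcal O(\theta^2)$. Your expression for $\xi_j^{(1)}$ is exactly the paper's intermediate formula, since $1+\tfrac{\phi_i}{2\alpha}(e^{-\alpha\tau}-e^{-\alpha t})=\tfrac{\phi_i}{2\alpha}(a_i+e^{-\alpha\tau})$.

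Your route buys brevity, and the error control becomes a single Gronwall estimate on one ODE. For that estimate the relevant hypothesis is that $\psi_k^{(0)}$ stays bounded away from zero on $[0,t]$, i.e.\ $0<\phi_k<\infty$. It is not necessary that $a_k$ be bounded away from zero: $a_k\neq 0$ is used only to write the partial fractions, and $a_k=0$ is a removable singularity of the final formula.

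The paper's forward expansion works with $\phi$ rather than $1/\phi$. This gives it the mild advantage that its remainder stays uniform as some $\phi_k\to 0$, where the reciprocal expansion becomes non-uniform. The first-order coefficient is the same either way.
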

Inverting this LT is a formidable task.  We therefore make two simplifying assumptions.  First, suppose all individuals are initially of type $i$: 
\[
x_j(0) = x_0 \delta_{ij}, \quad j = 1, \ldots, d.  
\]
Then, 
\[
\zeta(t, \bm{\phi}) = e^{-x_0 \frac{\phi_i}{1 + \phi_i/\mu(t)}} \left(1 - \theta x_0 \xi_i^{(1)}(t, \bm{\phi}) + \mathcal{O}(\theta^2) \right).
\]

Second, suppose that the population survives to time $t$ and is descended from a single ancestral founder at time zero.  
Noting that the probability of extinction for the entire population given $\mathbf{Z}(0) = x_0 \mathbf{e}_i$ is the coefficient $e^{-x_0 \mu(t)}$ of the 
$\delta$-function in Eq.~(\ref{f1Allele}), conditioning on $\mathbf{Z}(t) > 0$ and a single ancestral founder gives
\begin{eqnarray}	\label{zeta1Ancestor}
\zeta^{\rm 1-ancestor}(t, \bm{\phi}) &  = & 
		\lim_{x_0 \to 0} \left(\text{LT of }\frac{f_{\mathbf{Z}(t)\mid\mathbf{Z}(0) = x_0 \mathbf{e}_i}(\mathbf{z}) - \delta(\mathbf{z}) e^{-x_0 \mu(t)}}
					{1 - e^{-x_0 \mu(t)}}\right)	\nonumber \\ 
	& = & \lim_{x_0 \to 0} \frac{\zeta(t, \bm{\phi}) - e^{-x_0 \mu(t)}} {1 - e^{-x_0 \mu(t)}}			\nonumber \\	
	& = & \lim_{x_0 \to 0} \frac{[1 - x_0 \frac{\phi_i}{1 + \phi_i/\mu(t)}][(1 - \theta x_0 \xi_i^{(1)}(t, \bm{\phi}) + \mathcal{O}(\theta^2)] - (1 - x_0\mu(t))} 
								{x_0\mu(t)}									\nonumber \\
	& = & - \frac{\phi_i/\mu(t)}{1 + \phi_i/\mu(t)} -  \frac{\theta \xi_i^{(1)}(t, \bm{\phi})}{\mu(t)} + 1  + \mathcal{O}(\theta^2)		\nonumber \\
	& = & \left(1 + \frac{\phi_i}{\mu(t)}\right)^{-1} - \frac{\theta \xi_i^{(1)}(t, \bm{\phi})}{\mu(t)} + \mathcal{O}(\theta^2).  
%
%
%
\end{eqnarray}
The first term is the LT of $f_\text{1-allele}^\text{1-ancestor}(t, z_i)\prod_{l \ne i} \delta(z_l)$, see Eq.~(\ref{1ancestor1allele}).   
It remains to find the inverse LT of Eq.~(\ref{xi1Of_tphi}).  
%
%

\subsection{Inverting the LT $\zeta^{\text{1-ancestor}}(t, \bm{\phi})$}

Inversion of the LT $\zeta^{\text{1-ancestor}}(t, \bm{\phi})$ requires inversion of $\xi_i^{(1)}(t, \bm{\phi})$ which is a lengthy calculation, details of which are given in \ref{appendix:invZeta}.  The result is 
summarised in the following theorem.  
\begin{theorem}	\label{thm:InverseZeta}
The inverse LT of $\zeta^{\text{1-ancestor}}(t, \bm{\phi})$ is 
\begin{equation}	\label{Z1ancestorDensity}
f^{\text{1-ancestor}}(t, \mathbf{z}) = \mu(t)^d F(\mu(t) \mathbf{z}), 
\end{equation}
where the function $F(\mathbf{u})$ for $\mathbf{u} \in \mathbb{R}_{\ge 0}^d$ takes the form of a sum over line and surface densities 
\begin{eqnarray}	\label{fU}
\lefteqn{F(\mathbf{u}) = F_0(u_i) \prod_{l \ne i} \delta(u_l) + \theta\left(F^{\text{line-par}}(u_i) \prod_{l \ne i} \delta(u_l) 
							+ \sum_{j \ne i} F_j^\text{line-perp}(u_j) \prod_{l \ne j} \delta(u_l) \right.}  \nonumber \\
	& & \qquad\qquad\qquad\qquad\qquad\qquad \left. + \sum_{j \ne i} F_j^\text{surface}(u_i, u_j) \prod_{l \ne i,j} \delta(u_l)\right) + \mathcal{O}(\theta^2),
\end{eqnarray}
as $\theta \to 0$, with
\begin{align*}
F_0(u_i) &= e^{-u_i}  - \beta(t) u_i e^{-u_i} \sum_{j \ne i} \frac{Q_{ij}}{\lambda_{ij}(u_i)}, \nonumber \\ 
F^{\text{line-par}}(u_i) & = \frac{1}{2\alpha} Q_{ii} \left\{ \alpha t (1 - u_i e^{\alpha t}) + (1 - e^{\alpha t})\gamma_{\rm EM} u_i \right\} e^{-u_i}, \nonumber \\
F_j^{\text{line-perp}}(u_j) &= \frac{1}{2\alpha} Q_{ij} \left\{E_1(u_j) - E_1(u_j e^{\alpha t}) e^{ (e^{\alpha t} - 1)u_j}\right\}, \nonumber \\ 
F_j^\text{surface}(u_i, u_j) &= \frac{1}{2\alpha} Q_{ij} \left\{(e^{\alpha t} - 1) u_j^{\lambda_{ij}(u_i)\theta -1} e^{-u_j} u_i \phantom{u_j^{\lambda_{ij}}} \right. \nonumber \\
					& \qquad\quad \left.\phantom{u_j^{\lambda_{ij}}} -e^{\alpha t}(u_i e^{\alpha t} - 2)E_1(u_j e^{\alpha t}) e^{(e^{\alpha t} - 1)u_j} 
										+ (u_i - 2)E_1(u_j) \right\}  e^{-u_i}, \nonumber 
\end{align*}  
$\beta(t)$ is defined by Eq.~(\ref{muBetaDef}), $\gamma_{\rm EM}$ is the Euler-Mascheroni constant, $E_1$ is the exponential integral 
\citep[Eq.~(5.1.1)]{Abramowitz:1965sf}, and  $\lambda_{ij}(u_i) > 0$ for $j \ne i$ are undetermined finite, analytic functions on $\mathbb{R}_{>0}$. 
\end{theorem}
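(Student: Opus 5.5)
First rescale and reduce to inverting a small set of elementary Laplace transforms. The scaling $f^{\text{1-ancestor}}(t,\mathbf z)=\mu(t)^dF(\mu(t)\mathbf z)$ is equivalent to writing $\zeta^{\text{1-ancestor}}$ as a function of $\mathbf{s}=\bm\phi/\mu(t)$. Then $F$ is the inverse LT in $\mathbf u$ of
\[
(1+s_i)^{-1}-\theta\,\xi_i^{(1)}(t,\mu(t)\mathbf s)/\mu(t).
\]
I would substitute $\phi_j=\mu(t)s_j$ into Eq.~(\ref{xi1Of_tphi}) and use $2\alpha/\phi_j=e^{\alpha t}\cdot\bigl(1-e^{-\alpha t}\bigr)/s_j\cdot$(appropriate factor), so that every prefactor becomes a rational function of $s_i,s_j$ with coefficients in $e^{\alpha t}$ and $\beta(t)$. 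This rewrites $\theta\xi_i^{(1)}/\mu$ as a sum over $j$ of $Q_{ij}$ times two kinds of terms:
\begin{itemize}
\item terms of the form (rational in $s_i,s_j$)$\times\alpha t$;
\item terms of the form (rational)$\times\log(1+s_j)$.
\end{itemize}
The $j=i$ term is special, since the bracket reduces to the $\alpha t$ piece alone, and it depends on $s_i$ only.

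The key device is to split each rational function by partial fractions. I would separate it into pieces depending only on $s_i$, only on $s_j$, and genuinely mixed pieces in $(s_i,s_j)$. A factor independent of $s_l$ inverts to $\delta(u_l)$, which produces the structure of Eq.~(\ref{fU}):
\begin{itemize}
\item $s_i$-only pieces give line densities along $u_i$, namely $F^{\text{line-par}}$ together with the $j=i$ contribution;
\item $s_j$-only pieces give $F_j^{\text{line-perp}}$;
\item mixed pieces give $F_j^{\text{surface}}$.
\end{itemize}
The inverse transforms needed are standard:
\begin{itemize}
\item $(1+s)^{-n}\mapsto u^{n-1}e^{-u}/(n-1)!$;
\item $\log(1+s)/s$ and $\log(1+s)/(s+c)$ invert to expressions in $E_1$, using $\int_0^\infty e^{-su}E_1(u)\,du=\log(1+s)/s$ and shifting;
\item poles at $s_j=c:=1/(e^{\alpha t}-1)$-type points (from $2\alpha/\phi_j-e^{-\alpha t}$) produce the factors $E_1(u_je^{\alpha t})e^{(e^{\alpha t}-1)u_j}$;
\item the $\gamma_{\rm EM}$ arises from the $u$-linear part, via $\mathcal L[\log u]=-(\gamma_{\rm EM}+\log s)/s$.
\end{itemize}

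The main obstacle is the surface term. There the mixed piece involves $\log(1+s_j)$ divided by a combination coupling $s_i$ and $s_j$, and near $s_j\to\infty$ its behaviour is $\sim s_j^{-1}\log s_j$ times a function of $s_i$. The inverse in $u_j$ is then not integrable at $u_j=0$ to this order; formally it produces $u_j^{-1}$ singularities. My plan is to regularise by resumming. A term $1-\theta\lambda\log(1+s_j)$ is the first-order expansion of $(1+s_j)^{-\lambda\theta}$, and $(1+s_j)^{-\lambda\theta}/s_j$-type factors invert to $u_j^{\lambda\theta-1}e^{-u_j}/\Gamma(\lambda\theta)$. The $\Gamma(\lambda\theta)^{-1}\approx\lambda\theta$ then cancels one power of $\theta$ and yields the integrable $u_j^{\lambda_{ij}(u_i)\theta-1}$ factor.

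Since the coefficient coupling to $s_i$ must be inverted in $u_i$ first, the exponent $\lambda_{ij}$ depends on $u_i$ and cannot be pinned down at first order; only its positivity and analyticity matter. This also explains the $O(1)$ correction $-\beta u_ie^{-u_i}\sum_j Q_{ij}/\lambda_{ij}$ in $F_0$: it is the mass removed from the line $u_j=0$ so that total mass is preserved. I would check this by confirming that $\int F=1+O(\theta^2)$, which follows by setting $\mathbf s=0$.

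The remaining step is bookkeeping: collect the coefficients and match them to the stated $F_0$, $F^{\text{line-par}}$, $F_j^{\text{line-perp}}$ and $F_j^{\text{surface}}$.
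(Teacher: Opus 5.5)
Your overall route is the paper's: rescale to $\mathbf s=\bm\phi/\mu(t)$, split $\tilde F^{(1)}$ by partial fractions, invert with standard pairs, and regularise the one non-invertible piece with a $u_j^{\lambda\theta-1}$ ansatz. \textbf{The gap is in that regularisation, and with it the origin of $\gamma_{\rm EM}$.}

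The paper uses the pair $\bigl[(\lambda\theta)^{-1}-\gamma_{\rm EM}\bigr]\delta(u)-u^{\lambda\theta-1}e^{-u}\longleftrightarrow\log(1+s)+\mathcal O(\theta)$. The $-\gamma_{\rm EM}\delta(u)$ is forced by $\Gamma(\lambda\theta)=(\lambda\theta)^{-1}-\gamma_{\rm EM}+\mathcal O(\theta)$. Your resummation $\theta\log(1+s_j)\to\lambda^{-1}\{1-(1+s_j)^{-\lambda\theta}\}$ inverts exactly to $\lambda^{-1}\delta(u_j)-\theta\,u_j^{\lambda\theta-1}e^{-u_j}/\Gamma(1+\lambda\theta)$. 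You then set $\Gamma(\lambda\theta)^{-1}\approx\lambda\theta$, and this is not harmless. Since $u_j^{\lambda\theta-1}e^{-u_j}$ has mass $\Gamma(\lambda\theta)\sim(\lambda\theta)^{-1}$, what you keep has transform $\theta\log(1+s_j)+\theta\gamma_{\rm EM}+\mathcal O(\theta^2)$, which is an $\mathcal O(\theta)$ error. Take the missing $-\theta\gamma_{\rm EM}\delta(u_j)$, multiply by the prefactor $-\frac{a}{2\alpha}Q_{ij}u_ie^{-u_i}$, and sum using $\sum_{j\ne i}Q_{ij}=-Q_{ii}$. The result is exactly the term $\frac{1-e^{\alpha t}}{2\alpha}Q_{ii}\gamma_{\rm EM}u_ie^{-u_i}$ of $F^{\text{line-par}}$.

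Your proposed source, $\mathcal L[\log u]=-(\gamma_{\rm EM}+\log s)/s$, plays no role. The $u_i$-linear factor is $u_ie^{-u_i}$, coming from $(1+s_i)^{-2}$, and no $\log u_i$ ever arises. Following your plan you would obtain $F_0$ and $F_j^{\text{surface}}$ as stated, but lose the $\gamma_{\rm EM}$ term. You would then have $\int F=1+\theta\,\frac{e^{\alpha t}-1}{2\alpha}Q_{ii}\gamma_{\rm EM}+\mathcal O(\theta^2)$. Keeping the exact factor $1/\Gamma(1+\lambda\theta)$ would be consistent, but it hides $\gamma_{\rm EM}$ in the surface coefficient and does not give the stated form. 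Your check ``by setting $\mathbf s=0$'' cannot detect any of this, because it tests only the transform. You must integrate the $u$-space representation directly, where $\int_0^\infty u^{\lambda\theta-1}e^{-u}\,du=(\lambda\theta)^{-1}-\gamma_{\rm EM}+\mathcal O(\theta)$ shows why the extra term is needed.

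Two further points need fixing.

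First, the piece needing regularisation is not one behaving like $s_j^{-1}\log s_j$. Such pieces invert exactly, with only an integrable logarithmic singularity at $u_j=0$: $\log(1+s)/s\mapsto E_1(u)$ and $\log(1+s)/(s-a)\mapsto[E_1((1+a)u)+\alpha t]e^{au}$. The culprit is the bare $(1+s_i)^{-2}\log(1+s_j)$. It comes from the constant in $\frac{(s_j-s_i)^2}{s_j(a-s_j)}=-1+\frac{s_i^2}{a}\frac{1}{s_j}-\frac{(a-s_i)^2}{a}\frac{1}{s_j-a}$ and grows like $\log s_j$. Relatedly, $u^{\lambda\theta-1}e^{-u}/\Gamma(\lambda\theta)$ is the inverse of $(1+s)^{-\lambda\theta}$, not of $(1+s)^{-\lambda\theta}/s$, which gives an incomplete gamma function.

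Second, the pole is at $s_j=a=e^{\alpha t}-1$, not at $1/(e^{\alpha t}-1)$. It is removable in $\tilde F^{(1)}$ because $\log(1+a)=\alpha t$. In $u$-space this appears as the exact cancellation between $T_2$ and $T_3$ of the growing terms $\alpha t\,e^{au_j}$. You need that cancellation to arrive at $F_j^{\text{line-perp}}$ and $F_j^{\text{surface}}$ containing only the decaying combination $E_1(u_je^{\alpha t})e^{au_j}$.
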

The density $f^{\text{1-ancestor}}(t, \mathbf{z})$ requires some interpretation.  First note that it is conditioned on non-extinction of the population up to time $t$ 
and predicated on an assumption that the population is descended from a single ancestor of type $i$.  By approximating to first order in $\theta$ we are 
assuming at most one mutation has occurred in the ancestry of the final population.  
The line density $F_0(u_i) + \theta F^{\text{line-par}}(u_i)$ represents a population with no mutations in its ancestry and which is therefore non-segregating of type-$i$.  
The line density $\theta F_j^{\text{line-perp}}(u_j)$ represents a population whose ancestry contains exactly one mutation from type-$i$ to 
type-$j$ occurring before the MRCA of the population at time $t$, and which has therefore fixed throughout the surviving population.  The surface density 
$\theta F_j^\text{surface}(u_i, u_j)$ represents a population whose ancestry contains exactly one mutation from type-$i$ to type-$j$ occurring subsequent to 
the MRCA, which has therefore not fixed.  

The undetermined functions $\lambda_{ij}(u_i)$ arise when calculating the inverse LT of the part of Eq.~(\ref{xi1Of_tphi}) proportional to $\log(1 + \phi_j/\mu(t))$ 
with respect to $\phi_j$.  This inverse LT does not exist {\em per se}, but is regularised at Eq.~(\ref{LT_T3Part1})  within the proof by taking into account firstly that 
$\zeta^{\text{1-ancestor}}(t, \bm{\phi})$ only agrees with the exact LT of the density to first order in $\theta$, and secondly that known solutions to the 
forward Kolmogorov equation for diffusions with mutations  typically behave as $z^{\lambda\theta - 1}$ as the distance from the boundary $z$ tends to $0$, 
for some $\lambda > 0$.\footnote{Examples of this behaviour are the Wright-Fisher models with neutral parent-independent mutations, for which the stationary 
distribution is Dirichlet~\citep[Eq.~(5.126)]{Ewens:2004kx}, and the first order approximate stationary solution with general 
mutations~\citep[Appendix~A]{burden2016approximate}.}  Though it may be possible to calculate the $\lambda_{ij}(u_i)$, in many applications it is the 
moments of the density which are of interest.  
In the next section we find that the moments to first order in $\theta$ of the population $\mathbf{X}(t)$ conditioned on a single ancestor are independent 
of $\lambda_{ij}(u_i)$.  This is to be expected as these moments should be determined entirely by the LT $\zeta^{\text{1-ancestor}}(t, \bm{\phi})$.  

%
%

\section{Moments of $\mathbf{X}(t)$}
\label{sec:XMoments}

Introduce the notation $\mathbf{X}^\mathbf{n} = \prod_{l = 1}^d X_l^{n_l}$ and $|n| = \sum_{l = 1}^d n_l$.  From Eq.~(\ref{ZDef}) the moments of 
$\mathbf{X}(t)$ conditioned on a single ancestral founder at time 0 are 
\begin{equation}	\label{momentsUtoX}
\mathbb{E}[\mathbf{X}(t)^\mathbf{n} \mid \mathcal{E}_i] = e^{|n|\alpha t}\mathbb{E}[\mathbf{Z}(t)^\mathbf{n} \mid \mathcal{E}_i]  
	= \beta(t)^{|n|}\mathbb{E}[\mathbf{U}(t)^\mathbf{n} \mid \mathcal{E}_i],
\end{equation}
where $\mathcal{E}_i$ is the event that the the population survives to time $t$ from a founding ancestor of type $i$, and we define 
$\mathbf{U}(t) := \mu(t)\mathbf{Z}(t)$.  From Eq.~(\ref{Z1ancestorDensity})
\[
\mathbb{E}[\mathbf{U}(t)^\mathbf{n} \mid \mathcal{E}_i] = \int_{(\mathbb{R}_{\ge 0})^d} \mathbf{u}^\mathbf{n} F(\mathbf{u}) d^d \mathbf{u},  
\]
with $F(\mathbf{u})$ as in Eq.~(\ref{fU}). 

Before calculating moments, we confirm the normalisation of $f^{\text{1-ancestor}}(t, \mathbf{z})$ to order $\theta$ by setting $\mathbf{n} = \mathbf{0}$.  
Thus  
\begin{align*}
\int_{(\mathbb{R}_{\ge 0})^d} f^{\text{1-ancestor}}(t, \mathbf{z}) d\mathbf{z} &=  \int_{(\mathbb{R}_{\ge 0})^d} F(\mathbf{u}) d\mathbf{u} \nonumber \\
	&= \int_0^\infty F_0(u_i)du_i + \theta\left(\int_0^\infty F^{\text{line-par}}(u_i) du_i  \right. \nonumber \\
	& \quad 	\left. + \sum_{j \ne i}\int_0^\infty F_j^{\text{line-perp}}(u_j) du_j
						+ \sum_{j \ne i} \int_0^\infty \int_0^\infty F_j^{\text{surface}}(u_i, u_j) du_i du_j \right).  
\end{align*}
Then make use of the integrals in \ref{appendix:MomentIntegrals} to get,
\begin{align*}
\int_0^\infty F_0(u_i)du_i &= 1 - \beta(t) \sum_{j \ne i} Q_{ij} \int_0^\infty \frac{u_i e^{-u_i}}{\lambda_{ij}(u_i)} du_i, 		\\
\theta \int_0^\infty F^{\text{line-par}}(u_i) du_i & = \frac{\theta}{2\alpha} Q_{ii} (1 - e^{\alpha t})(\alpha t + \gamma_{\rm EM}), 	\\
\theta \sum_{j \ne i}\int_0^\infty F_j^{\text{line-perp}}(u_j) du_j & = \frac{\theta}{2\alpha} Q_{ii}  \left(\frac{1}{e^{\alpha t} - 1} \alpha t - 1 \right),	\\
\theta \sum_{j \ne i} \int_0^\infty \int_0^\infty F_j^{\text{surface}}(u_i, u_j) du_i du_j& = \beta(t) \sum_{j \ne i} Q_{ij} \int_0^\infty \frac{u_i e^{-u_i}}{\lambda_{ij}(u_i)} du_i 
					+ \theta Q_{ii} \frac{e^{\alpha t} - 1}{2\alpha} \gamma_{\rm EM} \\
				& \qquad\qquad  - \frac{\theta}{2\alpha} Q_{ii} \left( \frac{e^{\alpha t}(2 - e^{\alpha t})}{e^{\alpha t} - 1} \alpha t - 1 \right) + \mathcal{O}(\theta^2).  
\end{align*}
Summing these gives $1 + \mathcal{O}(\theta^2)$, as required.  

For the moments $\mathbb{E}[\mathbf{U}(t)^\mathbf{n} \mid \mathcal{E}_i]$ with $|n| \ge 1$, the $\delta$-functions in Eq.~(\ref{fU}) ensure that the only 
non-zero moments to order $\theta$ are 
\begin{align*}
\mathbb{E}[U_i(t)^m \mid \mathcal{E}_i] 
	&= \int_0^\infty u_i^m F_0(u_i)du_i + \theta \int_0^\infty u_i^m F^{\text{line-par}}(u_i) du_i \\
	& \qquad\qquad + \theta \sum_{j \ne i} \int_0^\infty\int_0^\infty  u_i^m F^\text{surface}(u_i, u_j) du_i du_j, \\
\mathbb{E}[U_j(t)^n \mid \mathcal{E}_i] 
	&= \theta \int_0^\infty u_j^n F^{\text{line-perp}}(u_j) du_j + \theta \int_0^\infty\int_0^\infty  u_j^n F^\text{surface}(u_i, u_j) du_i du_j, \quad j \ne i, \\
\mathbb{E}[U_i(t)^m, U_j(t)^n \mid \mathcal{E}_i] 
	&= \theta \int_0^\infty \int_0^\infty u_i^m u_j^n F^\text{surface}(u_i, u_j) du_i du_j, \quad j \ne i,  
\end{align*}
where $m, n \ge 1$.  By straightforward application of the integrals in \ref{appendix:MomentIntegrals}, the required moments are 
\begin{align}
\mathbb{E}[U_i(t)^m \mid \mathcal{E}_i] &= m!\left[1 + \frac{\theta}{2\alpha}Q_{ii} \left\{ \frac{1 - me^{\alpha t}}{1 - e^{\alpha t}}  \alpha t - m + 1 \right\} \right] , 
											\label{EXim} \\
\mathbb{E}[U_j(t)^n \mid \mathcal{E}_i] &= \frac{\theta}{2\alpha} Q_{ij} (e^{\alpha t} - 1)(n - 1)! \times  \nonumber \\
						&\qquad \left\{1 - \frac{n}{(1 - e^{\alpha t})^n} \left( \alpha t + \sum_{l = 1}^n \frac{(1 - e^{\alpha t})^l}{l} \right) \right\},  
											\label{EXjn} \\
\mathbb{E}[U_i(t)^m U_j(t)^n \mid \mathcal{E}_i] 	&=  \frac{\theta}{2\alpha} Q_{ij} (n - 1)!m! \left\{ (m + 1)e^{\alpha t} - \frac{m + 2n + 1}{n + 1} \right.  \nonumber \\
						& \quad \left. - \, (2 - (m + 1)e^{\alpha t}) \frac{ne^{\alpha t}}{(1 - e^{\alpha t})^{n + 1}} 
							\left( \alpha t + \sum_{l = 1}^n \frac{(1 - e^{\alpha t})^l}{l} \right) 	\right\},  \label{EXimXjn}
\end{align}
for $m, n \ge 1$ and $j \ne i$.  Note that all contributions in terms of the undetermined functions $\lambda_{ij}(u_i)$ cancel.  
Alternative forms of Eqs.~(\ref{EXim}) and (\ref{EXjn}) in terms of ordinary hypergeometric functions are 
 \begin{align*}
\mathbb{E}[U_j(t)^n \mid \mathcal{E}_i] &= \frac{\theta}{2\alpha} Q_{ij} (e^{\alpha t} - 1)(n - 1)! \times  \\
						&\qquad \left\{1 - \frac{n}{n + 1} (e^{\alpha t} - 1) \,_2F_1(1, n+ 1; n + 2; 1 - e^{\alpha t}) \right\}, \\
\mathbb{E}[U_i(t)^m U_j(t)^n \mid \mathcal{E}_i] 	&=  \frac{\theta}{2\alpha} Q_{ij} \frac{(n - 1)!m!}{n + 1} \left\{(m + 1)(n + 1)e^{\alpha t} - m - 2n - 1 \right.   \\
						&\qquad\qquad \left. + \, (2 - (m + 1)e^{\alpha t}) ne^{\alpha t}  \,_2F_1(1, n+ 1; n + 2; 1 - e^{\alpha t}) \right\}. 
\end{align*}
 
 Moments for the critical case are found by taking the limit $\alpha \to 0$:  
 \begin{align*}
\lim_{\alpha \to 0} \mathbb{E}[U_i(t)^m \mid \mathcal{E}_i] &=  m! \left[ 1 + \tfrac{1}{4} \theta Q_{ii} (m + 1) t \right],  \\
\lim_{\alpha \to 0} \mathbb{E}[U_j(t)^n \mid \mathcal{E}_i] &=  \tfrac{1}{2} \theta Q_{ij} (n - 1)! t, \\
\lim_{\alpha \to 0} \mathbb{E}[U_i(t)^m U_j(t)^n \mid \mathcal{E}_i] 	&=  \theta Q_{ij} m!(n - 1)! \frac{m + n + 1}{(n + 1)(n + 2)} t,  
\end{align*}
for $m, n \ge 1$ and $j \ne i$.  

Finally, moments of $\mathbf{X}$ are found from moments of $\mathbf{U}$ by multiplying by an appropriate power of $\beta(t)$ 
according to Eq.~(\ref{momentsUtoX}).  

%

\subsection{Consistency with sub-critical quasi-stationary moments}
\label{sec:QSMoments}

For the sub-critical case $\alpha < 0$ the above results should be consistent with first order in $\theta$ quasi-stationary moments derived in 
\citet[Section~6.1]{BurdenGriffiths23}.  However, those moments were derived by perturbing the density 
\[
2|\alpha|\sum_{i = 1}^d \pi_i e^{-2|\alpha|x_i}, \quad x_i \ge 0, 
\]
which is the $\theta \to 0$ limit of the exact quasi-stationary distribution whose mean is known to be aligned with the left-eigenvector 
$\bm{\pi}$ of the rate matrix $Q$ \citep[Lemma~2 and Theorem~1]{BurdenGriffiths23}.  Here $\bm{\pi}$ satisfies $\sum_{i = 1}^d \pi_i = 1$ 
and $\sum_{i = 1}^d \pi_i Q_{ij} = 0$.  

By contrast, the above first order in $\theta$ calculation of moments implicitly assumes only at most one mutation has occurred 
in the coalescent tree since founding by a single ancestor of a specified type at time zero, and therefore the distribution maintains 
a memory of the allele-type of that ancestral founder.  Note that, in the quasi-stationary limit, the MRCA of the surviving 
population must exist since, as argued in \citet[Eq.~(11)]{BurdenGriffiths23}, only the $l = 1$ term in the density 
Eq.~(\ref{FellersPoissonGammaMixture}) survives in this limit. In order to establish the consistency of the current paper 
with \citet{BurdenGriffiths23}, the type of this ancestral individual is set in a way consistent with the above unperturbed 
density, thus allowing up to one more perturbation to contribute to the perturbed quasi-stationary density.  
That is, we set an initial founder randomly chosen to be of type-$i$ with probability $\pi_i \ge 0$, leading to the quasi-stationary  moments 
\begin{equation}	\label{EofXforQS}
\mathbb{E}_{\rm QS}[\mathbf{X}^\mathbf{n}] = \lim_{t \to \infty}\sum_{i = 1}^d \pi_i \mathbb{E}[\mathbf{X}(t)^\mathbf{n} \mid \mathcal{E}_i]
			= \lim_{t \to \infty}\beta(t)^{|n|} \sum_{i = 1}^d \pi_i \mathbb{E}[\mathbf{U}(t)^\mathbf{n} \mid \mathcal{E}_i],  
\end{equation} 
with the conditioned expectations $\mathbb{E}[\mathbf{X}(t)^\mathbf{n} \mid \mathcal{E}_i]$ calculated in Eqs.~(\ref{EXim}) to (\ref{EXimXjn}).  
Details of the calculation showing that this expectation does indeed agree with \citet{BurdenGriffiths23} are given in \ref{appendix:QSmoments}.

%

\section{Conclusions}
\label{sec:Conclusions}

We have determined approximate solutions to first order in the small overall mutation rate $\theta$ for the multi-type neutral branching 
diffusion governed by the generator Eq.~(\ref{genOfXMultitype}).  This diffusion is a generalisation of the Feller diffusion to multiple types 
and can be thought of as an analogue of the diffusion limit of neutral Wright-Fisher or Moran diffusions in which population size is allowed 
to vary stochastically.  

In Section~\ref{sec:leadingOrder} the zeroth order approximation in $\theta$ to the evolution of the multi-type population density is determined.  
The interesting case (see Proposition~\ref{prop:leadingOrder}) is the supercritical case in which we find that the asymptotic density at small 
$\theta$ is not closely approximated by the simple the naive solution corresponding to independent Feller diffusions for all $t$, but instead undergoes a 
rapid change in behaviour at a critical time $t_{\rm c}$, after which the support of the density is sandwiched between a set of hyperplanes 
which collapse towards the stationary left eigenvector of the mutation rate matrix.  This is consistent with the known stationary solution of both 
discrete and continuous-state branching diffusions.  The proof of Proposition~\ref{prop:leadingOrder} is a generalisation to arbitrary rate matrices 
of an earlier result by \citet{burden2018mutation} which was restricted to $d = 2$ types and a symmetric rate matrix.  In that paper numerical 
simulations were carried out with illustrated the rapid change from a perturbation of independent Feller diffusions to a collapse towards a line density.  

In Section~\ref{sec:firstOrder} the first order in $\theta$ corrections to the the naive zeroth order density corresponding to independent Feller diffusions are 
calculated, noting that such corrections are only valid in the supercritical case for $t < t_{\rm c}$, but are valid more generally for the critical and 
sub-critical cases.  Our approach is to solve the forward Kolmogorov equation by Laplace transforming with respect to the population size, and then 
solving the resulting first order partial differential equation Eq.~(\ref{zetapde}) by integrating along characteristic curves perturbed from 
those of the $\theta = 0$ solution.  The solution (see Theorem~\ref{thm:InverseZeta}) is represented in terms of line and surface densities on the boundary of 
the domain $(\mathbb{R}_{\ge 0})^d$ of the density.  The need for such a representation is inevitable for a first order approximation, 
firstly because the $\mathcal{O}(\theta)$ approximation implicitly allows at most one mutation 
in the coalescent tree of any sample of the population, and secondly because of the singular nature of the unknown exact density near the boundary.  
A similar representation was found necessary by \citet{BurdenGriffiths23} who calculated the first-order corrections to the quasi-stationary distribution 
for a subcritical multi-type diffusion.  

The representation of the density in terms of line and surface densities lends itself to straightforward calculation to first order in $\theta$ of moments 
of $\mathbf{X}(t)$, which are calculated in Section~\ref{sec:XMoments}.  Ideally one would like to calculate sampling distributions, which require moments 
conditioned on total population size, so adding an extra level of difficulty to the calculation.  A promising alternative to solving the forward Kolmogorov 
equation, which has proved useful for the Wright-Fisher diffusion~\citep{BurdenGriffiths19a,BurdenGriffiths19b}, is to work directly from the coalescent tree.  
In such an approach we hope to exploit the known distribution of coalescent times for Feller diffusions~\citep{BurdenGriffiths24,BurdenGriffiths25}.  

%

\section*{Acknowledgement}

CB would like to thank Robert Griffiths for helpful discussions and a careful reading of the manuscript.

%
%

\appendix

%

\section{Proof of Proposition~\ref{prop:leadingOrder}}
\label{sec:leadingOrderProof}

For a general rate matrix $(\gamma_{ij})_{i, j = 1}^d$ the LT of $f(t, \mathbf{z})$ evolves via Eq.~(\ref{zetapde}) which rearranges to 
\[
\frac{\partial \zeta}{\partial t} + 
	\sum_{j = 1}^d \left( \tfrac{1}{2} e^{-\alpha t} \phi_j^2 - \sum_{i = 1}^d \gamma_{ji} \phi_i \right) \frac{\partial \zeta}{\partial \phi_j} =0.  
\]
The exact equations for the characteristic curves are 
\begin{equation}	\label{charEqns}
\frac{d\phi_j}{dt} = \tfrac{1}{2} e^{-\alpha t} \phi_j^2 - \sum_{i = 1}^d \gamma_{ji} \phi_i , \quad j = 1, \ldots, d.
\end{equation}
Setting an initial condition $\bm{\phi} = \bm{\xi}$ at $t = 0$ and solving for $\bm{\xi}$ in terms of any point $(t, \bm{\phi})$ by tracing the solution to the characteristic 
curve back to its origin at time zero, one obtains the solution for the LT from 
\begin{equation}	\label{zetaSoln}
\zeta(t, \bm\phi; \mathbf{x}(0)) = \mathbb{E}\left[e^{-\bm{\xi}\cdot \mathbf{Z}(0)}\right] =  \exp \{-\bm{\xi}\cdot \mathbf{x}(0)\}.  
\end{equation}
 
In terms of a rescaled time $\tau = t/t_{\rm c}$, where $t_{\rm c}$ is defined in Eq.~(\ref{tcDef}), the characteristic equations become 
\begin{eqnarray*}
\frac{d\phi_j}{d\tau} & = & \tfrac{1}{2} t_{\rm c} e^{-\alpha t_{\rm c} \tau} \phi_j^2 - \tfrac{1}{2} t_{\rm c} \theta \sum_{i = 1}^d Q_{ji} \phi_i \nonumber \\
	& = & -(2\alpha)^{-1} \log\theta \{\theta^\tau \phi_j^2 + \theta \sum_{i = 1}^d Q_{ji} \phi_i\}.  
\end{eqnarray*}
If $0 < \theta << 1$ and $\alpha > 0$ the first term on the right hand side dominates for $t < t_{\rm c}$ and the second term dominates for $t > t_{\rm c}$.  
Accordingly, approximate the characteristic equations by 
\begin{equation}	\label{charEqnsApprox}
\frac{d\phi_j}{dt} \approx  \begin{cases}
		\displaystyle \tfrac{1}{2} e^{-\alpha t} \phi_j^2		& t < t_{\rm c}; \\
		- \displaystyle \sum_{i = 1}^d \gamma_{ji} \phi_i 	& t > t_{\rm c}, 
		\end{cases}
\end{equation}
for $j = 1, \ldots, d$, where it is assumed that $\theta << 1$.  

The solution to the approximate characteristic equation for $t < t_{\rm c}$ is 
\[
\phi_i = \frac{\xi_i}{1 - \frac{\xi_i}{2\alpha}(1 - e^{-\alpha t})} =  \frac{\xi_i}{1 - \xi_i/\mu(t)}, \qquad i = 1, \ldots, d, \quad t < t_{\rm c},
\]
which inverts to 
\[
\xi_i = \frac{\phi_i}{1 + \phi_i/\mu(t)} , \qquad i = 1, \ldots, d, \quad t < t_{\rm c}.  
\]
The approximate LT for $t < t_{\rm c}$ is then 
\begin{eqnarray*}
\zeta(t, \bm\phi; \mathbf{x}(0)) & = & \exp \left\{- \sum_{i = 1}^d  x_i(0) \frac{\phi_i}{1 + \phi_i/\mu(t)} \right\} \nonumber \\
					& = & \prod_{i = 1}^d \zeta_{\text{1-allele}}(t, \phi_i; x_i(0)),   \qquad t < t_{\rm c}, 
\end{eqnarray*}
with $\zeta_{\text{1-allelle}}$ defined by Eq.~(\ref{zeta1Allele}).  The inverse LT then gives the leading order contribution to the first line of Eq.~(\ref{approxFZ}).  

The boundary condition for the characteristic curves at the beginning of the second phase, $t > t_{\rm c}$, is 
\[
c_i :=  \phi_i(t_{\rm c}) \approx \frac{\xi_i}{1 - \frac{\xi_i}{2\alpha}}, \qquad i = 1, \ldots, d, 
\]
where $\mathcal{O}(\theta)$ terms have been ignored in the approximation.  This inverts to 
\begin{equation}	\label{xiInTermsOfC}
\xi_i \approx \frac{c_i}{1 + \frac{c_i}{2\alpha}}, \qquad i = 1, \ldots, d.   
\end{equation}
The solution to the approximate characteristic equation Eq.~(\ref{charEqnsApprox}) with this boundary condition is 
\[
\bm{\phi} = e^{-\gamma(t - t_{\rm c})}  \mathbf{c} = \pi(t)^{-1} \mathbf{c}, \qquad t > t_{\rm c},  
\]
where $\bm{\phi} = (\phi_1, \ldots, \phi_d)^{\rm T}$, $\mathbf{c} = (c_1, \ldots, c_d)^{\rm T}$, and 
$\pi(t) = e^{\gamma(t - t_{\rm c})}$ is a $d \times d$ matrix.  Reinserting Eq.~(\ref{xiInTermsOfC}) with $\mathbf{c} = \pi(t) \bm{\phi}$ into 
Eq.~(\ref{zetaSoln}) gives the approximate LT for $t > t_{\rm c}$, 
\begin{eqnarray}	\label{zetaPhase2}
\zeta(t, \bm\phi; \mathbf{x}(0)) & = & \exp \left\{- \sum_{i = 1}^d  x_i(0) \frac{c_i}{1 + \frac{c_i}{2\alpha}} \right\} \nonumber \\
					& = & \prod_{i = 1}^d \zeta_{\text{1-allelle}}\left(\infty, (\pi(t) \bm{\phi})_i; x_i(0)\right), \qquad t > t_{\rm c}.
\end{eqnarray}

It remains to confirm that Eq.~(\ref{zetaPhase2}) is the LT of the second line of Eq.~(\ref{approxFZ}).  Taking the LT and using the change 
of variables $\tilde{\mathbf{z}}^{\rm T} = \mathbf{z}^{\rm T}\pi(t)^{-1}$, we have
\begin{eqnarray*}
\text{LT of Eq.~(\ref{approxFZ})}_{t > t_{\rm c}} & = & \det \pi(t)^{-1} \int_{\mathbb{R}^d} e^{-\mathbf{z}^{\rm T} \bm{\phi}} \prod_{i = 1}^d f_{\text{1-allelle}}\left( \infty, (\mathbf{z}^{\rm T} \pi(t)^{-1})_i; x_i(0) \right) d^d\mathbf{z} \\
& = & \int_{\mathbb{R}^d} e^{-\tilde{\mathbf{z}}^{\rm T} \pi(t) \bm{\phi}} \prod_{i = 1}^d f_{\text{1-allelle}}\left( \infty, \tilde{z}_i; x_i(0) \right) d^d\tilde{\mathbf{z}} \\
& = & \prod_{i = 1}^d \int_\mathbb{R} e^{-\tilde{z}_i (\pi(t) \bm{\phi})_i} f_{\text{1-allelle}}\left( \infty, \tilde{z}_i; x_i(0) \right) d\tilde{z}_i \\
& = &  \prod_{i = 1}^d \zeta_{\text{1-allelle}}\left(\infty, (\pi(t) \bm{\phi})_i; x_i(0)\right), 
\end{eqnarray*}
as required.  \qed

%
%

\section{Proof of Lemma~\ref{lemma:zetaPerturb}: Perturbative solution of the LT in $\mathbf{Z}$ to first order in $\theta$}
\label{appendix:zetaPerturb}

Returning to the exact characteristic equation, Eq.~(\ref{charEqns}), with an initial condition $\bm{\phi} = \bm{\xi}$ at $t = 0$, set 
\[
\phi_i(t, \bm{\xi}) = \phi_i^{(0)}(t, \bm{\xi}) + \theta \phi_i^{(1)}(t, \bm{\xi}) +o(\theta),  \qquad i = 1, \ldots, d, 
\]
as $\theta \to 0$, where, for each $i$,  
\begin{equation}	\label{phi0Defn}
\phi_i^{(0)}(t, \bm{\xi}) =  \frac{\xi_i}{1 - \frac{\xi_i}{2\alpha}(1 - e^{-\alpha t})} , \qquad i = 1, \ldots, d
\end{equation}
is the unperturbed solution to the characteristic equation, Eq.~(\ref{charEq1Allele}).  
Substituting into Eq.~(\ref{charEqns}) with $\gamma_{ij}$ as in Eq.~(\ref{gammaParam}) and keeping terms to $\mathcal{O}(\theta)$, we have 
\begin{eqnarray*}
\frac{d\phi_i^{(1)}(t)}{dt} & = & \phi_i^{(0)}(t) \phi_i^{(1)}(t) e^{-\alpha t} - \frac{1}{2} \sum_{j = 1}^d Q_{ij} \phi_j^{(0)}(t) \nonumber \\
    & = & \frac{\xi_i e^{-\alpha t}}{1 - \frac{\xi_i}{2\alpha}(1 - e^{-\alpha t})} \phi_i^{(1)}(t) - 
    						\frac{1}{2} \sum_{j = 1}^d Q_{ij} \frac{\xi_j}{1 - \frac{\xi_j}{2\alpha}(1 - e^{-\alpha t})}, \quad i = 1, \ldots, d.  
 \end{eqnarray*}

For each $i$ this is an equation of the form 
\[
\frac{d\phi^{(1)}}{dt} - A(t) \phi^{(1)} = - B(t).   
\]
Given the initial condition $\phi^{(1)}(0) = 0$, the solution is
\[
\phi^{(1)}(t) = - \int_0^t B(\tau) e^{\int_\tau^t A(\eta) d\eta} d\tau.  
\]
Calculating the inner integral, 
\begin{eqnarray*}
e^{\int_\tau^t A(\eta) d\eta} & = & \exp \int_\tau^t \frac{\xi e^{-\alpha \eta} d\eta}{1 - \frac{\xi}{2\alpha}(1 - e^{-\alpha \eta})} \nonumber \\
	& = & \exp \left[ -2\log \left\{1 - \frac{\xi}{2\alpha}(1 - e^{-\alpha\eta})\right\}\right]_{\eta = \tau}^t  \nonumber \\
	& = & \left(\frac{1 - \frac{\xi}{2\alpha}(1 - e^{-\alpha\tau})}{1 - \frac{\xi}{2\alpha}(1 - e^{-\alpha t})}\right)^2.  
\end{eqnarray*}
Assembling the pieces then gives, 
\begin{eqnarray}	\label{phi1Soln}
\phi_i^{(1)}(t, \bm{\xi}) & = & - \frac{1}{2} \sum_{j = 1}^d Q_{ij} \int_0^t \frac{\xi_j}{1 - \frac{\xi_j}{2\alpha}(1 - e^{-\alpha \tau})} 
				\left(\frac{1 - \frac{\xi_i}{2\alpha}(1 - e^{-\alpha\tau})}{1 - \frac{\xi_i}{2\alpha}(1 - e^{-\alpha t})}\right)^2 d\tau \nonumber \\
	& = & - \frac{1}{2} \sum_{j = 1}^d Q_{ij} \int_0^t \phi_j^{(0)}(\tau, \bm{\xi}) \left(\frac{\phi_i^{(0)}(t, \bm{\xi})}{\phi_i^{(0)}(\tau, \bm{\xi})}\right)^2 d\tau.  
\end{eqnarray}

Given any $(t,\bm{\phi})$ with $t > 0$ and $\bm{\phi} = (\phi_1, \ldots, \phi_d)$ we trace the characteristic curve back to $(0, \bm{\xi})$ by 
solving 
\begin{equation}	\label{bmXiEqn}
\phi_i = \phi_i^{(0)}(t, \bm{\xi}) + \theta \phi_i^{(1)}(t, \bm{\xi}), \qquad i = 1, \ldots, d, 
\end{equation}
for $\bm{\xi}(t, \bm{\phi})$, where $\bm{\phi}^{(0)}(t, \bm{\xi})$ and $\bm{\phi}^{(1)}(t, \bm{\xi})$ are given by Eqs.~(\ref{phi0Defn}) and (\ref{phi1Soln}) respectively.  
Then the required LT consistent with the initial condition Eq.~(\ref{zetaIC}) will then be  
\begin{equation}	\label{LTofDensity}
\zeta(t, \bm{\phi}) = \exp\left(-\mathbf{x}(0)\cdot \bm{\xi}(t, \bm{\phi})\right).  
\end{equation}

Note that we only need to solve Eq.(\ref{bmXiEqn}) to first order in $\theta$.  Accordingly, set 
\[
\bm{\xi}(t, \bm{\phi}) = \bm{\xi}^{(0)}(t, \bm{\phi}) + \theta \bm{\xi}^{(1)}(t, \bm{\phi}) + o(\theta), 
\]
where 
\begin{equation}	\label{xi0Of_tphi}
\xi_i^{(0)}(t, \bm{\phi}) = \frac{\phi_i}{1 + \frac{\phi_i}{2\alpha}(1 - e^{-\alpha t})} = \frac{\phi_i}{1 + \phi_i/\mu(t)}, \qquad i = 1, \ldots, d, 
\end{equation}
is the inverse of Eq.~(\ref{phi0Defn}).  The first term in Eq.~(\ref{bmXiEqn}) is, to $\mathcal{O}(\theta)$,  
\begin{eqnarray}	\label{bmXiEqn1stTerm}
\phi_i^{(0)}(t, \bm{\xi}(t, \bm{\phi})) & = & \phi_i^{(0)}\left(t, \bm{\xi}^{(0)}(t, \bm{\phi}) + \theta\bm{\xi}^{(1)}(t, \bm{\phi})\right) \nonumber \\
	& = & \phi_i^{(0)}(t, \bm{\xi}^{(0)}(t, \bm{\phi})) + \theta \sum_{j = 1}^d \xi_j^{(1)}(t, \bm{\phi}) \partial_j \phi_i^{(0)}(t, \bm{\xi}^{(0)}(t, \bm{\phi})) \nonumber \\  
	& = & \phi_i + \theta \xi_i^{(1)}(t, \bm{\phi}) \left(1 + \frac{\phi_i}{2\alpha}(1 - e^{-\alpha t})\right)^2,  
\end{eqnarray}
where we have used 
\[
\partial_j \phi_i^{(0)}(t, \bm{\xi}) := \frac{\partial \phi_i^{(0)} (t, \bm{\xi})}{\partial \xi_j}  = \frac{\delta_{ij}}{\left(1 - \frac{\xi_i}{2\alpha}(1 - e^{-\alpha t})\right)^2}.  
\]
The second term in Eq.~(\ref{bmXiEqn}) is, to $\mathcal{O}(\theta)$,  
\begin{eqnarray}	\label{bmXiEqn2ndTerm}
\theta \phi_i^{(1)}(t, \bm{\xi}^{(0)}(t, \bm{\phi})) & = & 
	- \tfrac{1}{2}\theta \sum_{j = 1}^d Q_{ij} \int_0^t \phi_j^{(0)}(\tau, \bm{\xi}^{(0)}(t, \phi)) 
						\left(\frac{\phi_i^{(0)}(t, \bm{\xi}^{(0)}(t, \phi))}{\phi_i^{(0)}(\tau, \bm{\xi}^{(0)}(t, \phi))}\right)^2 d\tau \nonumber \\
	& = & - \tfrac{1}{2}\theta \sum_{j = 1}^d Q_{ij} \phi_j \int_0^t \frac{\left(1 + \frac{\phi_i}{2\alpha} (e^{-\alpha\tau} - e^{-\alpha t})\right)^2}
												{1 + \frac{\phi_j}{2\alpha} (e^{-\alpha\tau} - e^{-\alpha t})} d\tau,  
\end{eqnarray}
where we have used that 
\begin{eqnarray*}
\phi_i^{(0)}(\tau, \bm{\xi}^{(0)}(t, \phi)) & = & \frac{\xi_i^{(0)}(t, \phi)}{1 - \frac{\xi_i^{(0)}(t, \phi)}{2\alpha}(1 - e^{-\alpha\tau})} \nonumber \\
	& = & \frac{\phi_i / (1 + \frac{\phi_i}{2\alpha}(1 - e^{-\alpha t}))}{1 - \{(\phi_i/(2\alpha)) / (1 + \frac{\phi_i}{2\alpha}(1 - e^{-\alpha t})\}(1 - e^{-\alpha\tau})}  \nonumber \\
	& = & \frac{\phi_i}{1 + \frac{\phi_i}{2\alpha}(e^{-\alpha\tau} - e^{-\alpha t})}.  
\end{eqnarray*}

Plugging Eqs.~(\ref{bmXiEqn1stTerm}) and (\ref{bmXiEqn2ndTerm}) back into Eq.~(\ref{bmXiEqn}) and rearranging gives 
\begin{equation*}
\xi_i^{(1)}(t, \bm{\phi}) = \frac{1}{2\left(1 + \frac{\phi_i}{2\alpha}(1 - e^{-\alpha t})\right)^2} 
		\sum_{j = 1}^d Q_{ij} \phi_j \int_0^t \frac{\left(1 + \frac{\phi_i}{2\alpha} (e^{-\alpha\tau} - e^{-\alpha t})\right)^2}
												{1 + \frac{\phi_j}{2\alpha} (e^{-\alpha\tau} - e^{-\alpha t})} d\tau.  
\end{equation*}

The integral is tractable: 
\begin{eqnarray*}
 \int_0^t \frac{\left(1 + \frac{\phi_i}{2\alpha} (e^{-\alpha\tau} - e^{-\alpha t})\right)^2}
												{1 + \frac{\phi_j}{2\alpha} (e^{-\alpha\tau} - e^{-\alpha t})} d\tau 
		& = & \frac{\phi_i^2}{2\alpha \phi_j} \int_0^t \frac{(A + e^{-\alpha \tau})^2}{B + e^{-\alpha \tau}} d\tau \nonumber \\
		& = & \frac{\phi_i^2}{2\alpha^2 \phi_j} \int_{e^{-\alpha t}}^1 \frac{(A + v)^2}{(B + v)v} dv, 
\end{eqnarray*}
where
\[
A = \tfrac{2\alpha}{\phi_i} - e^{-\alpha t}, \qquad B = \tfrac{2\alpha}{\phi_j} - e^{-\alpha t}.  
\]
The indefinite integral is, 
\[
\int \frac{(A + v)^2}{(B + v)v} dv = \frac{1}{B}\{A^2\log v - (A - B)^2 \log(B + v) + Bv \},
\]
and hence the definite integral is 
\[
\frac{\phi_i^2}{2\alpha^2 \phi_j} \left(\frac{1}{B}\left\{ \alpha A^2 t + (A - B)^2 \log\frac{B + e^{-\alpha t}}{B + 1} \right\} + 1 - e^{-\alpha t}\right).  
\]
Reinserting this above and noting that $\sum_{j = 1}^d Q_{ij} = 0$, 
\begin{eqnarray*} 
\xi_i^{(1)}(t, \bm{\phi}) & = &\frac{1}{2\left(1 + \frac{\phi_i}{2\alpha}(1 - e^{-\alpha t})\right)^2} \sum_{j = 1}^d Q_{ij} \times \nonumber \\
	& & \qquad \frac{\phi_i^2}{2\alpha^2} \frac{1}{\frac{2\alpha}{\phi_j} - e^{-\alpha t}} \left\{ \left(\frac{2\alpha}{\phi_i} - e^{-\alpha t} \right)^2 \alpha t 
				+ \left(  \frac{2\alpha}{\phi_i} - \frac{2\alpha}{\phi_j} \right)^2  \log \frac{\frac{2\alpha}{\phi_j}}{\frac{2\alpha}{\phi_j} + 1 - e^{-\alpha t}} \right\}
										\nonumber \\
	& = & \frac{\left(\frac{\phi_i}{2\alpha}\right)^2}{\left(1 + \frac{\phi_i}{\mu(t)}\right)^2} \sum_{j = 1}^d Q_{ij} \frac{1}{\frac{2\alpha}{\phi_j} - e^{-\alpha t}} \times \nonumber \\ 
	& & \qquad\qquad\qquad \left\{ \left(\frac{2\alpha}{\phi_i} - e^{-\alpha t} \right)^2 \alpha t - \left(  \frac{2\alpha}{\phi_i} 
							- \frac{2\alpha}{\phi_j} \right)^2 \log \left(1 + \frac{\phi_j}{\mu(t)}\right) \right\}.  
\end{eqnarray*}

The required LT is then, from Eqs.~(\ref{LTofDensity}) and (\ref{xi0Of_tphi}) 
\begin{eqnarray*}
\zeta(t, \bm{\phi}) & = & \exp\left(-\mathbf{x}(0)\cdot (\bm{\xi}^{(0)}(t, \bm{\phi}) + \theta \bm{\xi}^{(1)}(t, \bm{\phi}) + o(\theta))\right) \nonumber\\
	&  = & \exp\left\{ -\sum_{j = 1}^d x_j(0) \left[\frac{\phi_j}{1 + \phi_j/\mu(t)} + \theta \xi_j^{(1)}(t, \bm{\phi})\right]\right\} \nonumber \\
	& = & e^{-\sum_{j = 1}^d x_j(0) \frac{\phi_j}{1 + \phi_j/\mu(t)}} e^{-\theta \sum_{j = 1}^d x_j(0) \xi_j^{(1)}(t, \bm{\phi})} \nonumber\\
	& = & \zeta^{(0)}(t, \bm{\phi}) \left(1 - \theta \sum_{j = 1}^d x_j(0) \xi_j^{(1)}(t, \bm{\phi}) + \mathcal{O}(\theta) \right),
\end{eqnarray*}
agreeing with Eq.~(\ref{zetaOfTPhi}), where $\xi_i^{(1)}(t, \bm{\phi})$ is given in Eq.(\ref{xi1Of_tphi}).
\qed

%
%

\section{Proof of Theorem~\ref{thm:InverseZeta}: Inversion of the LT $\zeta^{\rm 1-ancestor}(t, \bm{\phi})$}
\label{appendix:invZeta}

This appendix is devoted to calculating the inverse LT of Eq.~(\ref{zeta1Ancestor}).  Set 
\begin{equation}	\label{saDef}
s_i = \frac{\phi_i}{\mu(t)} = \frac{\phi_i}{2\alpha}(1 - e^{-\alpha t}), \quad a = e^{\alpha t} - 1 = 2\alpha\beta(t), 
\end{equation}
and rewrite Eq.~(\ref{zeta1Ancestor}) as 
\begin{equation}	\label{zetaRewritten}
\zeta^{\rm 1-ancestor}(t, \bm{\phi}) = \tilde{F}(\mathbf{s}) = \frac{1}{1 + s_i} + \theta \tilde{F}^{(1)}(\mathbf{s}),
\end{equation}
where, from Eq.~(\ref{xi1Of_tphi}), 
\begin{eqnarray}	\label{fOfSDef}
\tilde{F}^{(1)}(\mathbf{s}) & := & -\frac{\xi_i^{(1)}(t, \mu(t)\mathbf{s})}{\mu(t)} \nonumber \\
	& = & \frac{-a}{2\alpha (1 + s_i)^2} \sum_{j = 1}^d Q_{ij} \frac{s_j}{a - s_j} 
			\left\{ \alpha t \frac{(a - s_i)^2}{a^2} - \frac{(s_j - s_i)^2}{s_j^2} \log\left(1 + s_j \right) \right\}.  
\end{eqnarray}
The inverse LT of $\tilde{F}(\mathbf{s}) = \int_{(\mathbb{R}_{\ge 0})^d} F(\mathbf{u}) e^{-\mathbf{s}\cdot\mathbf{u}} d^d\mathbf{u}$ is 
\begin{equation}	\label{fOfUdef}
F(\mathbf{u}) = e^{-u_i} \prod_{l \ne i} \delta(u_l) + \theta F^{(1)}(\mathbf{u}) 
\end{equation}
where $F^{(1)}(\mathbf{u})$ is the inverse LT of $\tilde{F}^{(1)}(\mathbf{s})$.  From Eq.~(\ref{zetaRewritten}), the inverse LT of 
$\zeta^{\rm 1-ancestor}(t, \bm{\phi})$ is then 
\begin{eqnarray}
f^{\rm 1-ancestor}(t, \mathbf{z}) & = & \mu(t)^d F(\mu(t) \mathbf{z}) \label{f1aInAppendix} \\
	& = & \mu(t)e^{-z_i \mu(t)} \prod_{l \ne i}\delta(z_l) + \theta \mu(t)^d F^{(1)}(\mu(t) \mathbf{z}). \nonumber
\end{eqnarray}

To determine $F^{(1)}(\mathbf{u})$, split the sum in Eq.~(\ref{fOfSDef}) into parts as follows, 
\begin{equation}	\label{tildeFParts}
\tilde{F}^{(1)}(\mathbf{s}) = Q_{ii} \tilde{T}_1(s_i) + \sum_{j \ne i} Q_{ij} \{\tilde{T}_2(s_i, s_j) + \tilde{T}_3(s_i, s_j)\},  
\end{equation}
where 
\[
\tilde{T}_1(s_i) = -\frac{t}{2a} \frac{s_i(a - s_i)}{(1 + s_i)^2}, 
\] 
\[
\tilde{T}_2(s_i, s_j) = -\frac{t}{2a} \frac{(a - s_i)^2}{(1 + s_i)^2} \frac{s_j}{(a - s_j)}, 
\]
and 
\begin{eqnarray*}
\tilde{T}_3(s_i, s_j) & = & \frac{a}{2\alpha(1 + s_i)^2} \frac{1}{a - s_j} \frac{(s_j - s_i)^2}{s_j} \log\left(1 + s_j \right) \nonumber \\
	& = &- \frac{a}{2\alpha(1 + s_i)^2} \left\{ 1 + \left(a - 2s_i + \frac{s_i^2}{a}\right) \frac{1}{s_j -a} - \frac{s_i^2}{a} \frac{1}{s_j} \right\} \log\left(1 + s_j \right). 
\end{eqnarray*}
The inverse LT of $\tilde{F}^{(1)}(\mathbf{s})$ takes the form 
\begin{equation}	\label{inverseF1}
F^{(1)}(\mathbf{u}) = Q_{ii} T_1(u_i) \prod_{l \ne i}\delta(u_l) + \sum_{j \ne i} Q_{ij} \{T_2(u_i, u_j) + T_3(u_i, u_j)\} \prod_{l \ne i,j}\delta(u_l).  
\end{equation}

\begin{table}[t]
\caption{LTs used to invert Eq.~(\ref{tildeFParts}).  $\delta(u)$ is the Dirac delta function, $\gamma_{\rm EM}$ is the Euler-Mascheroni constant, 
$E_1(z) = \int_1^\infty e^{-z\tau} d\tau/\tau$ is the exponential integral and $\lambda > 0$ is any positive real number.  }
\begin{center}
\begin{tabular}{l l}
$F(u)$                                                						& $\tilde{F}(s) = \int_0^\infty F(u) e^{-su} du$			\\
\hline
$\left[a + 2 - (1 + a)u\right]e^{-u} - \delta(u)$ 				& $s(a - s)/(1 + s)^2$   							\\
$(1 + a)\left[(1 + a) u - 2\right]e^{-u} + \delta(u)$				& $(a - s)^2/(1 + s)^2	$							\\
$-ae^{au} - \delta(u)$								& $s/(a - s)$									\\
$\left[(\lambda\theta)^{-1} - \gamma_{\rm EM} \right]\delta(u) 
	- u^{\lambda\theta -1} e^{-u}$						& $\log(1 + s) + \mathcal{O}(\theta)$ as $\theta \to 0$ 	\\
$\left[E_1((1 + a)u) + \log(1 + a)\right]e^{au}$				& $(s - a)^{-1} \log(1 + s)$							\\
$E_1(u)$											& $s^{-1} \log(1 + s)$								\\
$ue^{-u}$											& $1/(1 + s)^2$									\\
$(1 - u)e^{-u}$										& $s/(1 + s)^2$									\\
$(u - 2)e^{-u} + \delta(u)$								& $s^2/(1 + s)^2$								\\
\hline
\end{tabular}
\end{center}
\label{LaplaceTransforms}
\end{table}%

Table~\ref{LaplaceTransforms} lists the LTs used to invert each term.  The inverses of $\tilde{T}_1$ and $\tilde{T}_2$ are straightforward, giving 
\begin{equation}	\label{T1_of_u}
T_1(u_i) = \frac{t}{2a} \left\{\left[(1 + a)u_i - a - 2\right]e^{-u_i} + \delta(u_i)\right\},
\end{equation}
and
\begin{equation}	\label{T2_of_u}
T_2(u_i, u_j) = \frac{t}{2a} \left\{(1 + a)\left[(1 + a) u_i - 2\right]e^{-u_i} + \delta(u_i)\right\}\left\{ae^{au_j} + \delta(u_j)\right\}.
\end{equation}
Calculating $T_3$ requires the inverse LT of $(1 + s_i)^{-2}\log(1 + s_j)$ for $i \ne j$.  However, the inverse LT of $\log(1 + s)$ does not exist {\em per se}.  
In Table~\ref{LaplaceTransforms} we have regularised the inverse LT by exploiting the fact that $\zeta(t, \bm{\phi})$ has only been 
calculated to first order in the overall mutation rate $\theta$, and by making the observation that, to leading order in $\theta$, known solutions to the 
forward Kolmogorov equation for diffusions with mutations  typically behave as $z^{\lambda\theta - 1}$ as the distance from the boundary $z \to 0$, 
for some $\lambda > 0$.  The 2-dimensional inverse LT of the first part of $\tilde{T}_3$ is, to order $\theta$, 
\begin{eqnarray}	\label{LT_T3Part1}
\lefteqn{\text{LT}^{-1}\left( \frac{1}{(1 + s_i)^2} \log\left(1 + s_j \right) \right)}  \nonumber \\ 
	& = & \left[\left(\frac{1}{\lambda_{ij}(u_i) \theta} - \gamma_{\rm EM} \right)\delta(u_j) 
									- u_j^{\lambda_{ij}(u_i)\theta -1} e^{-u_j}\right] u_ie^{-u_i},  
\end{eqnarray}
where the $\lambda_{ij}(u_i) > 0$ for $j \ne i$ are as-yet undetermined finite, analytic functions on $\mathbb{R}_{>0}$.  
A proof of Eq.~(\ref{LT_T3Part1}) follows.  
\begin{proof}
Writing LT$_i$ and LT$_j$ for LTs with respect to $u_i$ and $u_j$ respectively and using LTs from Table~\ref{LaplaceTransforms}, 
\begin{eqnarray*}
\lefteqn{\text{LT}(\text{rhs of Eq.~(\ref{LT_T3Part1})})} \nonumber \\ 
	& = & \text{LT}_i\left(u_ie^{-u_i}\,  \text{LT}_j \left(\left(\frac{1}{\lambda_{ij}(u_i) \theta} - \gamma_{\rm EM} \right)\delta(u_j) 
									- u_j^{\lambda_{ij}(u_i)\theta -1} e^{-u_j}\right) \right) \nonumber \\
	& = & \text{LT}_i\left(u_ie^{-u_i} \left( \log\left(1 + s_j \right) + \mathcal{O}(\theta)\right)\right) \nonumber \\
	& = & \text{LT}_i\left(u_ie^{-u_i}\right) \log\left(1 + s_j \right) + \mathcal{O}(\theta) \nonumber \\
	& = & \frac{1}{(1 + s_i)^2} \log\left(1 + s_j\right) + \mathcal{O}(\theta), \qquad \text{as }\theta \to 0.  
\end{eqnarray*}
\end{proof}

Inverting the remaining parts of $\tilde{T}_3$ is a straightforward application of LTs from Table~\ref{LaplaceTransforms}, yielding   
\begin{eqnarray}	\label{T3_of_u}
T_3(u_i, u_j) & = & - \frac{a}{2\alpha} \left\{\left[\left(\frac{1}{\lambda_{ij}(u_i) \theta} - \gamma_{\rm EM} \right)\delta(u_j) 
									- u_j^{\lambda_{ij}(u_i)\theta -1} e^{-u_j}\right] u_ie^{-u_i} \right. \nonumber \\
	& & \qquad  + \left[au_ie^{-u_i} - 2(1 - u_i)e^{-u_i} + \frac{1}{a}(u_i - 2)e^{-u_i} + \frac{1}{a}\delta(u_i) \right]  \nonumber \\
	& & \qquad\qquad \times \left[E_1((1 + a)u_j) + \log(1 + a)\right]e^{au_j}	\nonumber \\ 
	& & \qquad - \left. \left[\frac{1}{a}(u_i - 2)e^{-u_i} + \frac{1}{a}\delta(u_i)\right] E_1(u_j) \right\} \nonumber \\
	& = & - \frac{1}{2\alpha} \left\{a\left[\left(\frac{1}{\lambda_{ij}(u_i) \theta} - \gamma_{\rm EM} \right)\delta(u_j) 
									- u_j^{\lambda_{ij}(u_i)\theta -1} e^{-u_j}\right] u_i\right. \nonumber \\
	& & \qquad  +\, (1 + a)\{(1 + a)u_i - 2\}  \left[E_1((1 + a)u_j) + \log(1 + a) \right]e^{au_j} \nonumber \\ 
	& & \qquad   \left.\vphantom{\frac{X}{Y}} - \, (u_i - 2)  E_1(u_j) \right\} e^{-u_i}  \nonumber \\
	& & \qquad   -\, \frac{1}{2\alpha} \left\{[E_1((1 + a)u_j) + \log(1 + a)]e^{au_j} - E_1(u_j) \right\} \delta(u_i) .
\end{eqnarray}

Substituting Eqs.~(\ref{T1_of_u}) and (\ref{T2_of_u}) into Eq.~(\ref{inverseF1}) and making use of the property $\sum_{j = 1}^d Q_{ij} = 0$ to cancel 
terms containing $\prod_{l = 1}^d \delta(u_l)$, the first two terms of $F^{(1)}(\mathbf{u})$ are 
\begin{eqnarray*}
\lefteqn{Q_{ii} T_1(u_i) \prod_{l \ne i}\delta(u_l) + \sum_{j \ne i} Q_{ij} T_2(u_i, u_j) \prod_{l \ne i,j}\delta(u_l)} \nonumber \\
	& = & \frac{t}{2a} \left\{ Q_{ii} [(1 + a)u_i - a - 2]e^{-u_i} \prod_{l \ne i} \delta(u_l) \right. \nonumber \\
	& & \qquad\quad + \sum_{j \ne i} Q_{ij} (1 + a)[(1 + a)u_i - 2] e^{-u_i} ae^{au_j} \prod_{l \ne i, j} \delta(u_l) \nonumber \\
	& & \qquad\quad + \sum_{j \ne i} Q_{ij} (1 + a)[(1 + a)u_i - 2] e^{-u_i} \prod_{l \ne i} \delta(u_l) \nonumber \\
	& & \qquad\quad \left. + \sum_{j \ne i} Q_{ij} ae^{au_j} \prod_{l \ne j} \delta(u_l) \right\} 
\end{eqnarray*}
\begin{eqnarray}	\label{QT1and2}
	& = & \frac{t}{2} \left\{ Q_{ii} [1 - (1 + a)u_i] e^{-u_i} \prod_{l \ne i} \delta(u_l) \right. \nonumber \\
	& & \qquad\quad  + \sum_{j \ne i} Q_{ij} e^{au_j} \prod_{l \ne j} \delta(u_l) \nonumber \\
	& & \qquad\quad \left. + \sum_{j \ne i} Q_{ij} (1 + a)[(1 + a)u_i - 2] e^{-u_i + au_j} \prod_{l \ne i, j} \delta(u_l) \right\}	 \nonumber \\
	& = & \frac{t}{2} \left\{ Q_{ii} (1 - u_i e^{\alpha t}) e^{-u_i} \prod_{l \ne i} \delta(u_l) \right. \nonumber \\
	& & \qquad\quad  + \sum_{j \ne i} Q_{ij} e^{au_j} \prod_{l \ne j} \delta(u_l) \nonumber \\
	& & \qquad\quad \left. + \sum_{j \ne i} Q_{ij} e^{\alpha t}(u_i e^{\alpha t} - 2) e^{-u_i + au_j} \prod_{l \ne i, j} \delta(u_l) \right\}.	
\end{eqnarray}

The remaining, $T_3$-dependent, term of $F^{(1)}(\mathbf{u})$ contains a piece proportional to $\theta^{-1}$ which contributes to the zeroth order 
in $\theta$ part of $F(\mathbf{u})$ as follows (see Eqs.~(\ref{fOfUdef}), (\ref{inverseF1}) and (\ref{T3_of_u}))
\begin{equation}	\label{OrderZeroPartF}
F(\mathbf{u}) = \left[ e^{-u_i}  - \frac{a}{2\alpha} u_i e^{-u_i} \sum_{j \ne i} \frac{Q_{ij}}{\lambda_{ij}(u_i)} \right] \prod_{l \ne i}\delta(u_l) + \mathcal{O}(\theta).
\end{equation}
The contribution to the coefficient of $\theta$ in $F(\mathbf{u})$ from $T_3(u_i, u_j)$ is then 
\begin{eqnarray}	\label{QT3}
\lefteqn{\sum_{j \ne i} Q_{ij} \{T_3(u_i, u_j) - \text{the term containing }\theta^{-1}\} \prod_{l \ne i,j}\delta(u_l)} \nonumber \\
	& = & -\frac{a}{2\alpha} Q_{ii} \gamma_{\rm EM} u_ie^{-u_i}  \prod_{l \ne i} \delta(u_l)  \nonumber \\ 
	& & -\, \frac{1}{2\alpha} \sum_{j \ne i} Q_{ij} \left\{[E_1((1 + a)u_j) + \log(1 + a)]e^{au_j} - E_1(u_j) \right\} \prod_{l \ne j} \delta(u_l)  \nonumber \\ 
	& & +\, \frac{1}{2\alpha} \sum_{j \ne i} Q_{ij} \left\{a u_j^{\lambda_{ij}(u_i)\theta -1} e^{-u_j} u_i \right.   \nonumber \\ 
	& & \qquad\qquad -\, (1 + a)\{(1 + a)u_i - 2\}  \left[E_1((1 + a)u_j) + \log(1 + a) \right]e^{au_j} \nonumber \\
	& & \qquad\qquad\qquad\qquad\qquad\qquad  \left.\phantom{u_j^{\lambda_{ij}}}    + \, (u_i - 2)  E_1(u_j) \right\} e^{-u_i} \prod_{l \ne i,j} \delta(u_l) \nonumber \\
	& = & \frac{1 - e^{\alpha t}}{2\alpha} Q_{ii} \gamma_{\rm EM} u_ie^{-u_i}  \prod_{l \ne i} \delta(u_l)  \nonumber \\ 
	& & -\, \frac{1}{2\alpha} \sum_{j \ne i} Q_{ij} \left\{[\alpha t + E_1(u_je^{\alpha t})]e^{au_j} - E_1(u_j) \right\} \prod_{l \ne j} \delta(u_l)  \nonumber \\ 
	& & +\, \frac{1}{2\alpha} \sum_{j \ne i} Q_{ij} \left\{(e^{\alpha t} - 1) u_j^{\lambda_{ij}(u_i)\theta -1} e^{-u_j} u_i \right.   \nonumber \\ 
	& & \qquad\qquad -\, e^{\alpha t}(u_i e^{\alpha t} - 2)  \left[\alpha t + E_1(u_j e^{\alpha t})\right]e^{au_j} \nonumber \\
	& & \qquad\qquad\qquad\qquad\qquad\qquad  \left.\phantom{u_j^{\lambda_{ij}}}    + \, (u_i - 2)  E_1(u_j) \right\} e^{-u_i} \prod_{l \ne i,j} \delta(u_l) \nonumber \\
\end{eqnarray}

Finally we insert Eqs~(\ref{QT1and2}), (\ref{OrderZeroPartF}) and (\ref{QT3}) into Eq.~(\ref{fOfUdef}) and the inverse LT of Eq.~(\ref{tildeFParts}) to obtain the 
inverse LT to order $\theta$ in the form of a sum over line and surface densities: 
\begin{eqnarray*}
\lefteqn{F(\mathbf{u}) = F_0(u_i) \prod_{l \ne i} \delta(u_l) + \theta\left(F^{\text{line-par}}(u_i) \prod_{l \ne i} \delta(u_l) 
							+ \sum_{j \ne i} F_j^\text{line-perp}(u_j) \prod_{l \ne j} \delta(u_l) \right.}  \nonumber \\
	& & \qquad\qquad\qquad\qquad\qquad\qquad \left. + \sum_{j \ne i} F_j^\text{surface}(u_i, u_j) \prod_{l \ne i,j} \delta(u_l)\right) + \mathcal{O}(\theta^2),
\end{eqnarray*}
where
\begin{align*} 
F_0(u_i) &= e^{-u_i}  - \frac{a}{2\alpha} u_i e^{-u_i} \sum_{j \ne i} \frac{Q_{ij}}{\lambda_{ij}(u_i)}, \nonumber \\ 
F^{\text{line-par}}(u_i) & = \frac{1}{2\alpha} Q_{ii} \left\{ \alpha t (1 - u_i e^{\alpha t}) + (1 - e^{\alpha t})\gamma_{\rm EM} u_i \right\} e^{-u_i}, \nonumber \\
F_j^{\text{line-perp}}(u_j) &= \frac{1}{2\alpha} Q_{ij} \left\{E_1(u_j) - E_1(u_j e^{\alpha t}) e^{ au_j}\right\}, \nonumber \\ 
F_j^\text{surface}(u_i, u_j) &= \frac{1}{2\alpha} Q_{ij} \left\{(e^{\alpha t} - 1) u_j^{\lambda_{ij}(u_i)\theta -1} e^{-u_j} u_i \phantom{u_j^{\lambda_{ij}}} \right. \nonumber \\
						& \qquad\quad \left.\phantom{u_j^{\lambda_{ij}}} -e^{\alpha t}(u_i e^{\alpha t} - 2)E_1(u_j e^{\alpha t}) e^{ au_j} + (u_i - 2)E_1(u_j) \right\}  e^{-u_i}, \nonumber \\
\end{align*}
$a$ is defined by Eq.~(\ref{saDef}) and the $\lambda_{ij}(u_i) > 0$ for $j \ne i$ are as-yet undetermined functions on $\mathbb{R}_{>0}$.  
Finally insert $F(\mathbf{u})$ into Eq.~(\ref{f1aInAppendix}) to obtain the density $f^{\text{1-ancestor}}(t, \mathbf{z})$ to $\mathcal{O}(\theta)$.  
\qed

%
%

\section{Integrals required for Section~\ref{sec:XMoments}}
\label{appendix:MomentIntegrals}

The following integrals are required for confirming the normalisation of $f^{\text{1-ancestor}}(t, \mathbf{z})$ and for calculating moments of $\mathbf{X}(t)$.  
\begin{align}
\int_0^\infty u^n E_1(u) du 							&= \frac{n!}{n + 1}, \quad n \ge 0,  	\label{intglunE1} \\
\int_0^\infty u^n e^{(e^{\alpha t} - 1)u} E_1(ue^{\alpha t}) du 	&= \begin{cases}
					\frac{-\alpha t}{(1 - e^{\alpha t})},							& n = 0; \\
					\frac{-n!}{(1 - e^{\alpha t})^{n + 1}} \left(\alpha t + \sum_{l = 1}^n \frac{(1 - e^{\alpha t})^l}{l} \right), & n \ge 1 \\
					\end{cases}					\label{intglAsSum}\\
												& = \frac{n!}{n + 1}\,_2F_1(1, n + 1; n + 2; 1 - e^{\alpha t}), \quad n \ge 0, \label{intgl2F1} \\
\int_0^\infty u^{\lambda\theta + n - 1} e^{-u} du 				&= \begin{cases}
					\frac{1}{\lambda\theta} - \gamma_{\rm EM} + \mathcal{O}(\theta), 	&n = 0; \\
					(n - 1)!  + \mathcal{O}(\theta), 								& n \ge 1.
					\end{cases} 					\label{uLambdaTheta}
\end{align}

Eq.~(\ref{intglunE1}) follows using elementary methods from  the definition $E_1(z) = \int_1^\infty e^{-z\tau} d\tau/\tau$ of the exponential integral. 
To derive Eq.~(\ref{intglAsSum}), begin with 
\begin{align*}
\int_0^\infty u^n e^{(e^{\alpha t} - 1)u} E_1(ue^{\alpha t}) du 	&= \int_0^\infty u^n e^{(C - 1)u} \int_1^\infty \frac{e^{-Cu\tau}}{\tau} d\tau\, du \\
	&= \int_1^\infty \frac{1}{\tau} \int_0^\infty u^n c^{-(C\tau + 1 - C)u} du\, d\tau \\
	&= \frac{n!}{C^{n + 1}} \int_1^\infty \frac{d\tau}{\tau(\tau + A)^{n + 1}},  
\end{align*}
where $C = e^{\alpha t}$ and $A = (1 - C)/C = e^{-\alpha t} - 1$.  The partial fraction decomposition is 
\[
\frac{1}{\tau(\tau + A)^{n + 1}} = \frac{1}{A}\left(\frac{1}{A^n\tau} - \sum_{k = 0}^n \frac{1}{A^k (\tau + A)^{n + 1 - k}} \right), \qquad n \ge 0,
\]
as can be checked by induction.  Hence the integral is 
\begin{align*}
\int_1^\infty \frac{d\tau}{\tau(\tau + A)^{n + 1}} &= \frac{1}{A^{n + 1}} \int_1^\infty \left(\frac{1}{\tau} - \frac{1}{\tau + A} \right) d\tau 
					- \frac{1}{A} \sum_{k = 0}^{n - 1} \frac{1}{A^k} \int_1^\infty \frac{d\tau}{(\tau + A)^{n + 1 - k}} \\ 
		&= \frac{1}{A} \left\{ \frac{1}{A^n} \log(1 + A) - \sum_{l = 1}^{n} \frac{1}{l} \frac{1}{(1 + A)^l A^{n - l}} \right\},  
\end{align*}
with the sum over $l$ absent when $n = 0$.  
Alternatively, begin with \cite[Eq.~2.117.3]{Gradshteyn65} and use induction.  Resubstituting for $C$ and $A$ then gives Eq.~(\ref{intglAsSum}).  

For moderately large $n$, numerical evaluation of Eq.~(\ref{intglAsSum}) is subject to roundoff errors because it is a small difference of two numbers.  
A numerically stable form is Eq.~(\ref{intgl2F1}).  Writing $z = 1 - e^{\alpha t}$ and using the identity $\log(1 - z) = -\sum_{l = 1}^\infty z^l/l$, 
Eq.~(\ref{intglAsSum}) is 
\begin{align*}
\frac{-n!}{z^{n + 1}}\left( \log(1 - z) + \sum_{l = 1}^n \frac{z^l}{l}\right) &= \frac{n!}{z^{n + 1}}\sum_{l = n + 1}^\infty \frac{z^l}{l} \\
	&= n!\sum_{k = 0}^\infty \frac{z^k}{k + 1 + n} \\
	&= \frac{n!}{n + 1} \frac{\Gamma(n + 2)}{\Gamma(1)\Gamma(n + 1)} 
					\sum_{k = 0}^\infty \frac{\Gamma(1 + k)\Gamma(n + 1 + k)}{\Gamma(n + 2 + k)} \frac{z^k}{k!} \\
	&= \frac{n!}{n + 1} \,_2F_1(1, n + 1; n + 2; z), 
\end{align*}
using the definition \citet[Eq.~(15.1.1)]{Abramowitz:1965sf} of the ordinary hypergeometric function in the last line. This agrees with Eq.~(\ref{intgl2F1}).  
Eq.~(\ref{uLambdaTheta}) follows by expanding the Gamma function $\Gamma(z) = \int_0^\infty u^{z - 1} e^{-u} du$ about $z = n$.  

%
%

\section{Consistency with sub-critical moments to first order in $\theta$}
\label{appendix:QSmoments}

The non-zero quasi-stationary moments calculated calculated to first order in $\theta$ from Eq.~(\ref{EofXforQS}) are, 
starting from Eqs.~(\ref{momentsUtoX}), (\ref{EXim}) and (\ref{EXjn}), 
\begin{align*}
\mathbb{E}_{\rm QS}[X_i^m] &= \lim_{t \to \infty} \beta(t; -|\alpha|) \left( \pi_i \mathbb{E}[U_i(t)^m \mid \mathcal{E}_i] 
										+ \sum_{j \ne i} \pi_j \mathbb{E}[U_i(t)^n \mid \mathcal{E}_j] \right) \nonumber \\
		&= \frac{1}{2|\alpha|} \lim_{t \to \infty} \left( \pi_i m!\left[1 - \frac{\theta}{2|\alpha|}Q_{ii} 
				\left\{ - \frac{1 - me^{-|\alpha| t}}{1 - e^{-|\alpha| t}}  |\alpha| t - m + 1 \right\} \right] \right. \nonumber \\ 
		&  \qquad\qquad - \sum_{j \ne i} \pi_j \frac{\theta}{2|\alpha|} Q_{ij} (e^{-|\alpha| t} - 1)(m - 1)! \times  \\
		& \qquad\qquad\qquad \left. \left\{1 - \frac{m}{(1 - e^{-|\alpha| t})^m}
										 \left[ -|\alpha| t + \sum_{l = 1}^m \frac{(1 - e^{-|\alpha| t})^l}{l} \right] \right\} \right) \nonumber \\
		&= \frac{1}{2|\alpha|} \lim_{t \to \infty} \left( \pi_i m!\left[1 - \frac{\theta}{2|\alpha|}Q_{ii} \left\{ - |\alpha| t - m + 1 \right\} \right] \right. \nonumber \\ 
		&  \qquad\qquad + \left. \pi_i \frac{\theta}{2|\alpha|} Q_{ii} (- 1)(m - 1)!  \left\{1 - m \left[ -|\alpha| t + H_m \right] \right\} \right) \nonumber \\
		&= \frac{1}{2|\alpha|} \pi_i m! + \frac{\theta}{4|\alpha|^2} \pi_i Q_{ii} (m - 1)! (m^2- m - 1 + mH_m), \quad m \ge 1; i = 1, \ldots d, 
\end{align*} 
where $H_m = \sum_{l = 1}^m l^{-m}$ is the $m$th harmonic number for $m \ge 1$, and from Eqs.~(\ref{momentsUtoX}) and (\ref{EXimXjn}), 
\begin{eqnarray*}
\lefteqn{\mathbb{E}_{\rm QS}[X_i^{n_i} X_j^{n_j}] } \nonumber \\
		&=& \lim_{t \to \infty} \beta(t; -|\alpha|)^2 \left( \pi_i \mathbb{E}[U_i(t)^{n_i} U_j(t)^{n_j} \mid \mathcal{E}_i] + (i \leftrightarrow j)\right) \nonumber \\
		&=& \frac{1}{4|\alpha|^2} \lim_{t \to \infty} \left( -\pi_i \frac{\theta}{2|\alpha|} Q_{ij} (n_j - 1)!n_i! 
							\left\{(n_i + 1)e^{-|\alpha| t} - \frac{n_i + 2n_j + 1}{n_j + 1} \right.\right.   \nonumber \\
		&& 	\qquad \left.\left. - \, (2 - (n_i + 1)e^{-|\alpha| t}) \frac{n_je^{-|\alpha| t}}{(1 - e^{-|\alpha| t})^{n_j + 1}} 
							\left( -|\alpha| t + \mathcal{O}(1) \right) \right\} + (i \leftrightarrow j) \right)     \nonumber \\
		& = & \frac{1}{8|\alpha|^3} \pi_i Q_{ij} \frac{(n_j - 1)!n_i!}{n_j + 1}(n_i - 2n_j - 1) + (i \leftrightarrow j), \quad  n_i \ne n_j; n_i, n_j \ge 1; i \ne j,
\end{eqnarray*} 
where $(i \leftrightarrow j)$ means the previous term with the indices $i$ and $j$ interchanged.  These moments agree with 
\citet[Theorem~3 and Eq.~(29)]{BurdenGriffiths23}, whose results are primarily stated for the case $\alpha = \tfrac{1}{2}$.  

%
%

\bibliographystyle{elsarticle-harv}\biboptions{authoryear}

\end{document}